\theoremstyle{plain}
\newtheorem{thm}{Theorem}[section]
\newtheorem{lem}[thm]{Lemma}
\newtheorem{cor}[thm]{Corollary}
\newtheorem{prop}[thm]{Proposition}
\newtheorem{question}[thm]{Question}
\newtheorem{problem}[thm]{Problem}
\theoremstyle{definition}
\newtheorem{remark}[thm]{Remark}
\newtheorem{example}[thm]{Example}
\begin{document}

\title{\bf Simple polynomial equations over $(2 \times 2)$-matrices}

\author{Vitalij A.~Chatyrko and Alexandre Karassev}

%\date{August 14, 2009}

\begin{abstract} 
We consider the polynomial equation
$$X^n + a_{n-1}\cdot X^{n-1} + \dots + a_1 \cdot X + a_0 \cdot I = O,$$
over $(2 \times 2)$-matrices $X$ with the real entries, where $I$ is the identity matrix, $O$ is the null matrix, $a_i \in \mathbb R$
for each $i$ and $n \geq 2$. We discuss its solution set $S$ supplied with the natural Euclidean topology.  We completely describe $S$. We also show that $\dim S =2.$
\end{abstract}

\keywords{polynomial equation over matrices; matrix algebra; covering dimension.}

\subjclass[2020]{Primary: 15A24, 54F45, Secondary: 15B30}

\maketitle

\section{Introduction}

As usual, by $M_2(\mathbb R)$  we denote the algebra of real 
$(2\times 2)$-matrices, and by $GL_2 (\mathbb R)$ the group of invertible $(2\times 2)$-matrices. We equip $M_2 (\mathbb R)$ with any of equivalent norms that make  $M_2(\mathbb R)$ a topological algebra, homeomorphic to $\mathbb R^4.$

Consider a polynomial with real coefficients  $f(x) = x^n + a_{n-1}x^{n-1} + \dots + a_1 x + a_0$ with  $n \geq 1.$

It is well known that a polynomial equation $$f(x) = 0 \eqno{(1)}$$  over the complex numbers $\mathbb C$  has exactly $n$
roots (counting multiplicities). 

Considered over the reals, the number of distinct roots of the equation $(1)$ is either between $0$ and $n$ if $n$ is even, or between $1$ and $n$, if $n$ is odd.
%In particular, if $n=2$ the equation $(1)$ over the real numbers  has the number of distinct roots  $0$, $1$ or $2$.

Next, consider a polynomial  $$F(X) = X^n + A_{n-1}X^{n-1} + \dots + A_1 X + A_0$$ of degree  $n \geq 1,$
where $X$ and $A_i$, $i=0,1,\dots, n-1,$ are from  $M_2(\mathbb R)$, and a polynomial equation
$$F(X) = O \eqno{(2)},$$
where $O$ is the null $(2\times 2)$-matrix. 

Any matrix $B \in M_2(\mathbb R)$ such that $F(B) = O$ is called {\it a solution} of $(2)$. {\it The solution set} of $(2)$ consists of all solutions of $(2)$.

It is easy to check that the matrices 
$$ \left(\begin{array}{rr}  t & \sqrt{1-t^2} \\ \sqrt{1-t^2} & -t  \end{array}\right), 
t \in [-1,1],$$ are solutions  of the equation $X^2 - I = O$, where $I$ is the identity $(2 \times 2)$-matrix.
Hence, for $n=2$ the number of distinct solutions of the equation $(2)$,  unlike to the equation $(1)$
over the real numbers, can be infinite (in this case,  it has the cardinality of continuum).

It is also readily seen that the equation $X^2 = \left(\begin{array}{rr}  0 & 1 \\ 0 & 0  \end{array}\right)$ has
no solution, the equation $X^2 = \left(\begin{array}{rr}  1 & 1 \\ 0 & 0  \end{array}\right)$ has
exactly $2$  solutions, the equation $ X^2 = \left(\begin{array}{rr}  1 & 0 \\ 0 & 2  \end{array}\right)$ has
exactly $4$ solutions.

Moreover, in \cite{W} Wilson demonstrated a method showing that the polynomial equation $(2)$ of degree $2$ 
can also have exactly $1, 3, 5$ or $6$  solutions.  The method uses eigenvalues and eigenvectors and it is due to Fuchs and Schwarz \cite{FS}. No other scenarios with finitely many solutions, except for those mentioned above, are possible. 

We simplify the polynomial  $F(X)$ by setting $A_m = a_m \cdot I$ for each $0 \leq m \leq n-1$, where each $a_m$ is a real number. Denote the polynomial  $F(X)$ in this case by $F_s(X)$ and note that $$F_s(X) = X^n + a_{n-1}\cdot X^{n-1} + \dots + a_1 \cdot X + a_0 \cdot I.$$

Consider the  equation $$F_s(X) = O\eqno{(3)}.$$
In this note, we consider the following problem.

\begin{problem}\label{problem_1} Describe the solution set $S$ of  $(3)$, where $n \geq2$.
\end{problem}

Everywhere in this note, by a space we mean a metric separable space. The covering dimension of a space $X$ will be denoted by $\dim X.$ See \cite[p. 385]{E} for the definition of $\dim$ and its main properties, such as:

\begin{itemize}
\item[(i)] {\it The countable sum theorem} \cite[Theorem 7.2.1]{E}: if a space $X$ is the union  $\cup_{i=1}^\infty X_i$, where each $X_i$ is a closed subset of $X$  with $\dim X_i \leq n, n \geq 0,$  then $\dim X \leq n,$ and 

\item[(ii)]{\it The monotone theorem} \cite[Theorem 7.3.4]{E}: for every subspace $A$ of a  space $X$ we have $\dim A \leq \dim X$.
\end{itemize}

Recall  that  $\dim \mathbb R^4 = 4$ and for any non-empty  space $X$ of cardinality less than continuum we have $\dim X = 0$. 

It is easy to see that the solution set $S$ of $(3)$ (and of $(2)$ as well) is a closed subset of $M_2(\mathbb R)$ and hence $\dim S \leq 4$. In fact, we can easily improve this estimate as follows.

\begin{prop} $\dim S \leq 3$. 

(The same is valid for the solution set of $(2)$.)
\end{prop}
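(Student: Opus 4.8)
The plan is to squeeze $S$ inside finitely many affine hyperplanes of $M_2(\mathbb R)\cong\mathbb R^4$ and then quote the two dimension theorems. Precisely, I will show that the set of traces of solutions, $\operatorname{tr}(S)=\{\operatorname{tr}X:X\in S\}\subseteq\mathbb R$, is \emph{finite}; say $\operatorname{tr}(S)=\{c_1,\dots,c_k\}$. Granting this, put $H_i=\{X\in M_2(\mathbb R):\operatorname{tr}X=c_i\}$. Each $H_i$ is an affine hyperplane, hence homeomorphic to $\mathbb R^3$, so $\dim H_i=3$; by the very definition of $\operatorname{tr}(S)$ we have $S\subseteq H_1\cup\dots\cup H_k$, hence $S=\bigcup_{i=1}^k(S\cap H_i)$ is a finite union of subsets closed in $S$. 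The monotone theorem (ii) gives $\dim(S\cap H_i)\le\dim H_i=3$, and then the countable sum theorem (i) gives $\dim S\le3$.

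So the whole matter reduces to proving that $\operatorname{tr}(S)$ is finite for equation $(3)$. Let $B\in S$, i.e.\ $F_s(B)=O$. Since $F_s$ has scalar coefficients, $F_s$ is an ordinary polynomial annihilating $B$, so the minimal polynomial $\mu_B$ of $B$ divides $F_s$; and $\deg\mu_B\in\{1,2\}$ because $B$ is a $(2\times2)$-matrix. If $\deg\mu_B=1$, then $B=\lambda I$ with $f(\lambda)=0$ for $f(x)=x^n+a_{n-1}x^{n-1}+\dots+a_0$, and $f$ has at most $n$ real roots, so $\operatorname{tr}B=2\lambda$ takes finitely many values. If $\deg\mu_B=2$, then $\mu_B$ divides the characteristic polynomial $\chi_B=x^2-(\operatorname{tr}B)\,x+\det B$, and since both are monic of degree $2$ we get $\mu_B=\chi_B$; thus $\chi_B$ is a monic quadratic divisor of $F_s$. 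There are only finitely many monic quadratic divisors of $F_s$ (over $\mathbb C$ each is $(x-\alpha)(x-\beta)$ with $\alpha,\beta$ among the finitely many complex roots of $F_s$), and each of them pins down $\operatorname{tr}B$ as the negative of its linear coefficient. Hence $\operatorname{tr}(S)$ is finite, and $(3)$ is done. (Note that the characteristic polynomial of a \emph{scalar} solution need not divide $F_s$, which is exactly why the two cases must be separated.)

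The case of the general equation $(2)$ is the main obstacle, because there the minimal polynomial of a solution need not interact with the matrix coefficients $A_i$ at all. Here I would use Cayley--Hamilton to reduce powers: with $t=\operatorname{tr}X$, $d=\det X$ one has $X^m=p_m(t,d)X+q_m(t,d)I$ for scalar polynomials $p_m,q_m$, hence $F(X)=P(t,d)\,X+Q(t,d)$ for matrix-valued polynomials $P,Q$ with $P(t,d)=t^{\,n-1}I+(\text{lower order in }t)$. I would then split $S$ according to the rank of $P(t,d)$: where $P(t,d)$ is invertible, $X=-P(t,d)^{-1}Q(t,d)$ is determined by $(t,d)$ and, comparing highest-order terms, the pairs $(t,d)$ that can occur form a proper algebraic subset of $\mathbb R^2$ (dimension $\le1$), so that part of $S$ has dimension $\le1$; where $P(t,d)$ is singular, $(t,d)$ lies on $\{\det P=0\}$, a proper algebraic subset of $\mathbb R^2$ since $\det P(t,d)=t^{\,2(n-1)}+\cdots\not\equiv0$, so that part of $S$ is contained in the proper algebraic subset $\{X:\det P(\operatorname{tr}X,\det X)=0\}$ of $\mathbb R^4$ and hence has dimension $\le3$. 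Combining the pieces with (i) yields $\dim S\le3$. (More economically: $S$ is an algebraic subset of $\mathbb R^4$, and it is proper because $F(tI)=t^nI+\cdots\ne O$ for large $t$; a proper algebraic subset of $\mathbb R^4$ has covering dimension at most $3$.)
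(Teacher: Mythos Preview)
Your argument for equation $(3)$ is correct and genuinely different from the paper's. You prove a structural fact --- that $\operatorname{tr}(S)$ is finite --- and deduce that $S$ lies in finitely many affine hyperplanes of $M_2(\mathbb R)\cong\mathbb R^4$, whence $\dim S\le 3$ via the monotone and countable sum theorems. The paper instead argues by contradiction that $S$ has empty interior: if $\dim S=4$ then $S$ contains a nonempty open set $U$; picking $B(0)\in U$ and perturbing only the $(1,1)$-entry by $t$ keeps $B(t)\in U$ for $t\in[0,\varepsilon]$, yet the $(1,1)$-entry of $F_s(B(t))$ is $t^n+p(t)$ with $\deg p\le n-1$, which cannot vanish on an interval. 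Your route yields more information (finiteness of the trace set, which in fact anticipates Theorem~\ref{character_theorem}), while the paper's route is shorter and needs no algebra of minimal polynomials.

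The real contrast shows up at equation $(2)$. The paper's perturbation argument works \emph{verbatim} there: expanding $(B(0)+tE_{11})^n$ one still gets leading term $t^nE_{11}$, so the $(1,1)$-entry of $F(B(t))$ is again a monic degree-$n$ polynomial in $t$, regardless of the matrix coefficients $A_i$. Your Cayley--Hamilton approach, by contrast, is considerably heavier, and the sketch has soft spots: the claim that the consistency conditions $\operatorname{tr}(-P^{-1}Q)=t$, $\det(-P^{-1}Q)=d$ cut out a \emph{proper} algebraic subset of $\mathbb R^2$ needs the asymptotic $P^{-1}Q\to 0$ as $t\to\infty$ to be spelled out, and passing from ``injective continuous image of a $\le 1$-dimensional set'' to ``$\le 1$-dimensional'' requires a $\sigma$-compactness argument you do not give. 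Your parenthetical shortcut (``$S$ is a proper real algebraic set in $\mathbb R^4$, hence $\dim S\le 3$'') is correct, but note that it is essentially the paper's argument in disguise: a proper algebraic set has empty interior, and a subset of $\mathbb R^4$ with empty interior has covering dimension $\le 3$.

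In short: for $(3)$ both proofs are fine, yours more informative, the paper's more economical; for $(2)$ the paper's single-line perturbation handles it uniformly, whereas your approach becomes substantially more involved.
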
 

\begin{proof} Assume that some  equation
$F_s(X) = 0$, $X\in M_2(\mathbb R),$ has the solution set $S$ with  $\dim S = 4$.
Then there exists a non-empty open subset $U$ of $M_2(\mathbb R)$ such that $U \subseteq S$.
Consider a matrix $B(0) = \left(\begin{array}{rr}  a & b \\ c & d  \end{array}\right)  \in U$. Note that there exists $\epsilon >0$ such that  the matrix $B(t) = \left(\begin{array}{rr}  (a+t) & b \\ c & d  \end{array}\right)$ belongs to $U$ for each $0 \leq t \leq \epsilon$.
Further,  $F_s(B(t)) = \left(\begin{array}{rr}  t^n + p(t) & * \\ * & * \end{array}\right)$, where $p(t)$ is a polynomial  of degree $\leq n-1$, and $t^n +p(t) = 0$ for every
$0 \leq t \leq \epsilon$. But the last is impossible. We have a contradiction. Hence, $\dim S \leq 3$. 
 \end{proof}
 
In connection with this, one can ask the following. 
 
\begin{question} \label{question_2} What is $\dim S$?
\end{question}

In this note we will solve Problem  \ref{problem_1} (see Theorem \ref{character_theorem}) and answer Question \ref{question_2} (see Theorem \ref{theorem_dim}).

%%%%%%%%%%%%%%%%%%%%%%%%%%%%%%%%%%%%%%%%%%%%%%%%%%%%%%%%

\section{Auxiliary facts}

%%%%%%%%%%%%%%%%%%%%%%%%%%%%%%%%%%%%%%%%%%%%%%%%%%%%%%%%

Recall that $(2\times 2)$-matrices $A$ and $B$ with entries from a field $\mathbb F$ are {\it similar} over $\mathbb F$ if there exists a $(2\times 2)$-invertible matrix $C$ with entries from $\mathbb F$ such that $A = C^{-1}BC$, and the similarity is an equivalence relation.

For real numbers $p$ and $q$, let 

$$D(p,q) = \left(\begin{array}{rr}  p & 0 \\ 0 & q  \end{array}\right), J(p) = \left(\begin{array}{rr}  p & 1 \\ 0 & p  \end{array}\right),$$ 
and for real numbers $a$ and $b$ with $a^2 +b^2 \ne 0$, let 
$$R(a,b) = \left(\begin{array}{rr}  a & b \\ -b & a \end{array}\right), 
U(a,b) = \left(\begin{array}{rr}  a + ib & 0 \\ 0 & a-ib  \end{array}\right).$$

The following propositions are standard facts from matrix algebra (see, for example, \cite{L}).

\begin{prop} \label{auxprop_1} For any $(2\times 2)$-matrix $A\in M_2(\mathbb R)$ there exists
 an invertible $C\in M_2(\mathbb R)$ such that one of the following is true:\\
$A=C^{-1} D(p,q)C$, $A = C^{-1} J(p) C,$ or $A = C^{-1} R(a,b) C,$\\
i.e. $A$ is similar over $\mathbb R$ either $D(p,q)$, $J(p)$ or $R(a,b).$
\end{prop}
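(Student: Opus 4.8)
The plan is to argue case-by-case according to the characteristic polynomial $\chi_A(x) = x^2 - (\operatorname{tr}A)\,x + \det A$ and its discriminant $\Delta = (\operatorname{tr}A)^2 - 4\det A$; this is essentially the real Jordan form for $2\times 2$ matrices, and the three listed normal forms correspond precisely to the sign of $\Delta$. Throughout I view $A$ as a linear operator on $\mathbb R^2$ (or on $\mathbb C^2$ when needed) and exhibit a basis in which the matrix of $A$ is one of $D(p,q)$, $J(p)$, $R(a,b)$; taking $C$ to be the change-of-basis matrix then gives the required similarity.

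\textbf{Case $\Delta > 0$.} Here $\chi_A$ has two distinct real roots $p\ne q$. Choosing eigenvectors $v_p, v_q$ for $p$ and $q$, these are linearly independent and form a basis of $\mathbb R^2$; in this basis $A$ is represented by $D(p,q)$.

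\textbf{Case $\Delta = 0$.} Here $\chi_A$ has a double real root $p = (\operatorname{tr}A)/2$. If $A = p\cdot I$ we are done with $C = I$. Otherwise $N := A - p\cdot I$ is nonzero, and by the Cayley--Hamilton theorem $N^2 = \chi_A(A) = O$, so $N$ is a rank-one nilpotent operator. Picking $v$ with $Nv\ne 0$ and setting $w = Nv$, the pair $\{w,v\}$ is a basis of $\mathbb R^2$, and in this basis $N$ is $\left(\begin{array}{rr} 0 & 1 \\ 0 & 0\end{array}\right)$, hence $A$ is $J(p)$.

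\textbf{Case $\Delta < 0$.} Here $\chi_A$ has complex conjugate roots $a\pm ib$ with $a = (\operatorname{tr}A)/2$ and $b = \tfrac12\sqrt{-\Delta}\ne 0$; note $a^2 + b^2\ne 0$. Regarding $A$ as acting on $\mathbb C^2$, choose an eigenvector $z = u + i w$, with $u,w\in\mathbb R^2$, for the eigenvalue $a + ib$. Separating real and imaginary parts of $Az = (a+ib)z$ gives $Au = a u - b w$ and $Aw = b u + a w$. One checks that $u$ and $w$ are linearly independent over $\mathbb R$: a nontrivial real dependence would make $z$ a complex scalar multiple of a single real vector, forcing that vector to be a real eigenvector of $A$ and contradicting $\Delta < 0$. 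Thus $\{u,w\}$ is a basis of $\mathbb R^2$, and in it $A$ is represented by $R(a,b)$.

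The only point requiring a little care is the linear independence of $u,w$ in the last case (and the split into the subcases $A = p\cdot I$ versus $A\ne p\cdot I$ when $\Delta = 0$); everything else is routine linear algebra, and the result can also simply be cited from \cite{L}.
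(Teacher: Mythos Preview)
Your proof is correct and is the standard derivation of the real Jordan normal form in the $2\times 2$ case. The paper, however, does not give its own proof of this proposition: it is stated as a well-known fact from matrix algebra with a reference to \cite{L}, so there is nothing to compare against beyond noting that your argument is precisely the kind of elementary justification one would find in such a reference.
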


\begin{prop}\label{auxprop_3} Any $A\in M_2(\mathbb R)$ is simiilar to $R(a,b)$ over $\mathbb R$ iff $A$  is similar to $U(a,b)$ over $\mathbb C$.
\end{prop}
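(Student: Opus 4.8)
The plan is to reduce the claim to two ingredients joined by transitivity of the similarity relations. The first ingredient is that $R(a,b)$ is itself similar to $U(a,b)$ over $\mathbb C$. If $b = 0$, then $R(a,0) = U(a,0) = aI$ and there is nothing to prove. If $b \ne 0$, the characteristic polynomial of $R(a,b)$ is $\lambda^2 - 2a\lambda + (a^2+b^2)$, whose roots $a + ib$ and $a - ib$ are distinct; hence $R(a,b)$ is diagonalizable over $\mathbb C$ with these eigenvalues, i.e. $R(a,b) = C^{-1} U(a,b) C$ for some $C \in GL_2(\mathbb C)$.

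The second, substantive, ingredient is the standard fact that two matrices in $M_2(\mathbb R)$ which are similar over $\mathbb C$ are already similar over $\mathbb R$. I would prove this by the "real part of the conjugating matrix" argument: if $A, B \in M_2(\mathbb R)$ and $CA = BC$ with $C \in GL_2(\mathbb C)$, write $C = C_1 + iC_2$ with $C_1, C_2 \in M_2(\mathbb R)$; separating real and imaginary parts gives $C_1 A = B C_1$ and $C_2 A = B C_2$, hence $(C_1 + tC_2)A = B(C_1 + tC_2)$ for all $t$. The map $z \mapsto \det(C_1 + zC_2)$ is a polynomial that does not vanish at $z = i$, so it is not identically zero, so it is nonzero at some real $t_0$; then $C_1 + t_0 C_2 \in GL_2(\mathbb R)$ realizes $A = (C_1 + t_0 C_2)^{-1} B (C_1 + t_0 C_2)$ over $\mathbb R$.

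Granting these, the proposition follows quickly. If $A$ is similar to $R(a,b)$ over $\mathbb R$, it is a fortiori similar to $R(a,b)$ over $\mathbb C$, and by the first ingredient and transitivity it is similar to $U(a,b)$ over $\mathbb C$. Conversely, if $A$ is similar to $U(a,b)$ over $\mathbb C$, then by the first ingredient it is similar to $R(a,b)$ over $\mathbb C$; since $A$ and $R(a,b)$ both lie in $M_2(\mathbb R)$, the second ingredient upgrades this to similarity over $\mathbb R$.

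The main obstacle is the second ingredient — the passage from $\mathbb C$-similarity to $\mathbb R$-similarity for real matrices; the rest is bookkeeping with eigenvalues and transitivity. The determinant-polynomial trick above disposes of it cleanly, though one could alternatively appeal to the fact that the rational canonical form is unchanged under field extension.
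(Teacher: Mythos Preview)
Your argument is correct. The paper, however, does not supply a proof of this proposition at all: it is introduced (together with Proposition~\ref{auxprop_1}) as one of ``standard facts from matrix algebra'' with a reference to \cite{L}, and no further justification is given. So there is no proof in the paper to compare against; you have filled in what the authors left as a citation.

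For what it is worth, your two-ingredient structure is exactly the standard way this fact is established. The first ingredient (that $R(a,b)$ has characteristic polynomial $\lambda^2 - 2a\lambda + (a^2+b^2)$ and hence diagonalizes over $\mathbb C$ to $U(a,b)$ when $b\ne 0$) is immediate, and your treatment of the degenerate case $b=0$ is fine since then both matrices equal $aI$. The second ingredient---that $\mathbb C$-similarity of real matrices descends to $\mathbb R$-similarity via the determinant-polynomial trick on $C_1 + tC_2$---is the classical argument and is carried out correctly. The alternative you mention (invariance of rational canonical form under field extension) would also work and is what a reference like \cite{L} would implicitly be invoking.
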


Let  $f_s(x) = x^n + a_{n-1}x^{n-1} + \dots + a_1 x + a_0$ be a polynomial with real coefficients $a_i, 0 \leq i \leq n-1,$ corresponding to the polynomial  $F_s(X)$.

Consider a polynomial equation $$f_s(x) = 0 .\eqno{(4)}$$

The following lemmas are evident.
\begin{lem} \label{lem_1}
\phantom{*}

$F_s(B(p,q)) =  \left(\begin{array}{rr}  f_s(p) & 0 \\ 0 & f_s(q)  \end{array}\right)$; 

 $F_s(J(p)) =  \left(\begin{array}{rr}  f_s(p) & f'_s(p) \\ 0 & f_s(p)  \end{array}\right)$; 

$F_s(U(a,b)) =  \left(\begin{array}{rr}  f_s(a+ib) & 0 \\ 0 & f_s(a-ib)  \end{array}\right)$. 

In particular, $p$ and $q$ are roots of $(4)$ iff $B(p,q)$ is a solution of $(3)$;
$p$ is a  root of $(4)$ with the multiplicity $\geq 2$ iff $J(p)$ is a solution of $(3)$;
$a\pm ib$ are roots of $(4)$ iff $F_s(U(a, b)) = O.$ 
\end{lem}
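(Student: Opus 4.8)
The plan is to compute $F_s$ on each of the three normal forms by first identifying their powers and then using that $F_s$ is a linear combination of powers of its argument (writing $a_n=1$ for the leading coefficient). For the diagonal matrix $D(p,q)$ one has $D(p,q)^k = D(p^k,q^k)$ for all $k\geq 0$, and hence
$$F_s(D(p,q)) = \sum_{k=0}^{n} a_k D(p^k,q^k) = D\bigl(f_s(p),\, f_s(q)\bigr).$$
The identical computation carried out over $\mathbb{C}$ applies to $U(a,b)=D(a+ib,\,a-ib)$ and gives $F_s(U(a,b)) = D\bigl(f_s(a+ib),\, f_s(a-ib)\bigr)$.

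For the Jordan block, write $J(p) = pI + N$ with $N = \left(\begin{array}{rr} 0 & 1 \\ 0 & 0 \end{array}\right)$, so that $N^2 = O$. Since $pI$ commutes with $N$, the binomial theorem yields $J(p)^k = p^k I + k p^{k-1} N$ for every $k\geq 1$ (all higher binomial terms containing the factor $N^2 = O$), and this also holds for $k=0$ once the $k=0$ summand of the $N$-coefficient is read as $0$. Summing over $k$ gives
$$F_s(J(p)) = \Bigl(\sum_{k=0}^{n} a_k p^k\Bigr) I + \Bigl(\sum_{k=1}^{n} k a_k p^{k-1}\Bigr) N = f_s(p)\, I + f'_s(p)\, N,$$
which is precisely the matrix claimed in the statement.

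The ``in particular'' assertions are then immediate from these three formulas: a matrix of the form $D(p,q)$ (respectively $U(a,b)$) makes the right-hand side equal to $O$ exactly when both diagonal entries vanish, i.e.\ when $p$ and $q$ (respectively $a+ib$ and $a-ib$) are roots of $(4)$; and $F_s(J(p)) = O$ exactly when $f_s(p)=0$ and $f'_s(p)=0$, which is the standard characterization of $p$ being a root of $f_s$ of multiplicity at least $2$. There is no real obstacle in this argument; the only point deserving a line of care is the binomial expansion for $J(p)$, together with the observation that the resulting coefficient of $N$ is exactly the formal derivative $f'_s(p)$.
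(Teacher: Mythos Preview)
Your proof is correct and is exactly the straightforward computation one would expect; the paper itself omits any argument, simply declaring the lemma ``evident,'' so your approach is the intended one spelled out in full. (Note the paper writes $B(p,q)$ in the lemma but defines only $D(p,q)$; this is a typo in the paper, and your use of $D(p,q)$ is the correct reading.)
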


\begin{lem} \label{lem_2} If $(2\times 2)$-matrices $A$ and $B$ (with real or complex entries) are similar over $\mathbb R$ or $\mathbb C$, and $F_s(A) = O,$ then $F_s(B) = O.$

In particular, $R(a,b)$ is a solution of $(3)$ iff $a\pm ib$ are roots of $(4)$. 
\end{lem}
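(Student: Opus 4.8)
The plan is to exploit that, for any field $\mathbb F$ and any invertible $C\in M_2(\mathbb F)$, the conjugation map $X\mapsto C^{-1}XC$ is an algebra automorphism of $M_2(\mathbb F)$ fixing every scalar matrix. First I would note that $B=C^{-1}AC$ implies $B^k=C^{-1}A^kC$ for each integer $k\geq 0$ (reading $B^0=I$), so multiplying the $k$-th of these identities by the real scalar $a_k$ (with $a_n=1$) and summing over $0\leq k\leq n$ yields
$$F_s(B)=C^{-1}\,F_s(A)\,C.$$
Hence $F_s(A)=O$ forces $F_s(B)=C^{-1}OC=O$. The one point to check is the mixed case: if $A,B\in M_2(\mathbb R)$ are assumed only to be similar over $\mathbb C$, then $C$ lies in $GL_2(\mathbb C)$, but since each $a_k$ is a real, hence complex, number the displayed identity still holds in $M_2(\mathbb C)$ and the conclusion is unchanged. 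Because similarity is a symmetric relation, this argument in fact gives $F_s(A)=O\iff F_s(B)=O$ whenever $A$ and $B$ are similar.

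For the ``in particular'' clause I would chain three equivalences. By Proposition \ref{auxprop_3}, $R(a,b)$ and $U(a,b)$ are similar over $\mathbb C$, so by the first part $F_s(R(a,b))=O$ iff $F_s(U(a,b))=O$. By Lemma \ref{lem_1}, $F_s(U(a,b))=\left(\begin{array}{rr} f_s(a+ib) & 0 \\ 0 & f_s(a-ib) \end{array}\right)$, which equals $O$ exactly when $f_s(a+ib)=f_s(a-ib)=0$, i.e.\ when $a\pm ib$ are roots of $(4)$. Combining, $R(a,b)$ is a solution of $(3)$ iff $a\pm ib$ are roots of $(4)$.

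I do not expect a genuine obstacle here: the statement is a formal consequence of conjugation being a scalar-fixing automorphism of the matrix algebra. The only items deserving (trivial) care are the treatment of the constant term $a_0I$, which is conjugation-invariant and so must be included explicitly in the summation identity, and being precise about which field the conjugating matrix $C$ belongs to in the mixed real/complex situation.
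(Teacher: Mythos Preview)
Your argument is correct and is exactly the standard verification one would give; the paper itself does not write out a proof, declaring Lemma~\ref{lem_2} (together with Lemma~\ref{lem_1}) ``evident,'' so your approach matches the intended one.
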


%%%%%%%%%%%%%%%%%%%%%%%%%%%%%%%%%%%%%%
\section{The structure of $S$}
%%%%%%%%%%%%%%%%%%%%%%%%%%%%%%%%%%%%%%%

For a given  $B\in M_2(\mathbb  R)$, define 
a map 
$$Conj_B\colon GL_2 (\mathbb R) \to M_2(\mathbb R)$$
by $Conj_B(X) = X^{-1} B X$. The following is easy to verify.

\begin{lem}\label{fibres} The map $Conj_B$ is continuous, and the image $Conj_B(GL_2 (\mathbb R))$ is $\sigma$-compact.
\end{lem}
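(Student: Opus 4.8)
\textbf{Proof proposal for Lemma \ref{fibres}.}

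The plan is to prove the two assertions separately, both being routine once set up correctly. For continuity, I would first observe that matrix multiplication $M_2(\mathbb R)\times M_2(\mathbb R)\to M_2(\mathbb R)$ is continuous (each entry of a product is a polynomial in the entries of the factors), and that inversion $X\mapsto X^{-1}$ is continuous on $GL_2(\mathbb R)$ (by Cramer's rule, each entry of $X^{-1}$ is a rational function of the entries of $X$ whose denominator $\det X$ is nonvanishing on $GL_2(\mathbb R)$). Since $Conj_B$ is the composition $X\mapsto(X^{-1},B,X)\mapsto X^{-1}BX$ of these continuous maps, it is continuous.

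For $\sigma$-compactness of the image, the key point is to exhibit $GL_2(\mathbb R)$ itself as a countable union of compact sets, so that its continuous image is then a countable union of compact sets as well. I would write
$$GL_2(\mathbb R)=\bigcup_{k=1}^\infty K_k,\qquad K_k=\Bigl\{X\in M_2(\mathbb R): \|X\|\le k \text{ and } |\det X|\ge \tfrac1k\Bigr\}.$$
Each $K_k$ is closed in $M_2(\mathbb R)$ (it is the intersection of the closed ball of radius $k$ with the closed set $\{|\det X|\ge 1/k\}$, the determinant being continuous) and bounded, hence compact by the Heine--Borel theorem in $M_2(\mathbb R)\cong\mathbb R^4$; and every invertible matrix lies in some $K_k$ once $k$ exceeds both $\|X\|$ and $1/|\det X|$. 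Consequently $Conj_B(GL_2(\mathbb R))=\bigcup_{k=1}^\infty Conj_B(K_k)$, and each $Conj_B(K_k)$ is compact as the continuous image of a compact set. Thus the image is $\sigma$-compact.

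There is no real obstacle here; the only mild subtlety is making sure the exhausting sets $K_k$ are genuinely compact, which is why the lower bound $|\det X|\ge 1/k$ is imposed as a closed condition rather than using the open set $\det X\ne 0$ directly. Everything else is a direct appeal to the Heine--Borel theorem and the continuity of polynomial and rational maps on $\mathbb R^4$.
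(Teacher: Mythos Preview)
Your proof is correct and complete. The paper itself gives no proof of this lemma beyond the remark ``The following is easy to verify,'' so your argument supplies exactly the standard details one would expect: continuity via the polynomial/rational nature of multiplication and inversion, and $\sigma$-compactness by exhausting $GL_2(\mathbb R)$ with the closed bounded sets $K_k=\{X:\|X\|\le k,\ |\det X|\ge 1/k\}$ and pushing forward under the continuous map.
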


For real numbers $p$, $q$, and real nubers $a$, $b$ with $a^2+b^2 \ne 0$,
let
$$S(D(p,q)) = Conj_{D(p,q)}(GL_2 (\mathbb R)),$$ 
$$S(J(p)) = Conj_{J(p)}(GL_2 (\mathbb R)),$$
$$S(R(a,b)) = Conj_{R(a,b)}(GL_2 (\mathbb R)).$$ 

\begin{remark} 
\begin{itemize}
\item[(i)]
For any $p \in \mathbb R$ we have $S(D(p,p)) = \{D(p,p)\}$ and 
$\dim S(D(p,p)) =0$. 
\item[(ii)] For any real numbers $p \ne q$ and a matrix 
$C(\phi) = \left(\begin{array}{rr}  \cos \phi & \sin \phi \\ -\sin \phi & \cos \phi  \end{array}\right)$
we have $$C(\phi)^{-1}B(p,q)C(\phi) = \left(\begin{array}{rr}  a(p,q, \phi) & b(p,q,\phi) \\ c(p,q,\phi) & d(p,q,\phi)  \end{array}\right),$$ where 

$a(p,q, \phi) = p \cos^2 (\phi) + q \sin^2 \phi$,
$d(p,q, \phi) = q \cos^2 (\phi) + p \sin^2 \phi$ and $b(p,q,\phi) = c(p,q,\phi) = \frac{p-q}{2} \sin 2 \phi$. 

It is easy to see that for $\phi_1, \phi_2 \in [0, \frac{\pi}{4}]$ 
$$C(\phi_1)^{-1}B(p,q)C(\phi_1) = C(\phi_2)^{-1}B(p,q)C(\phi_2)$$ iff $\phi_1 = \phi_2$.

Thus the cardinality of $S(D(p,q))$ is the
continuum.

\end{itemize}
\end{remark}

A natural question arises.
\begin{question} What is $\dim S(D(p,q))$ when $p \ne q$?
Further, what are $\dim S(J(p))$ and $\dim S(R(a,b))$?
\end{question}
We  answer this question in the next section (Corollary \ref{cor_about_S}).

Using Propositions \ref{auxprop_1}, \ref{auxprop_3}, and Lemmas \ref{lem_1}, \ref{lem_2} we get the following theorem.

\begin{thm} \label{character_theorem}The solution set $S$ of the equation $(3)$ is a disjoint union of the following sets:
$S(D(p,p))$, where $p$ is a real root of  $(4)$,
$S(J(p))$, where $p$ is a real root of $(4)$ with the multiplicity $\geq 2$,
$S(D(p,q))$, where $p < q$ are real roots of $(4)$,
$S(R(a,b))$, where $a+ib, b > 0,$ is a complex root of $(4)$.
  \end{thm}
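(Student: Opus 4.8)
The plan is to classify solutions by the Jordan-type of the matrix $B\in S$ given by Proposition~\ref{auxprop_1}, and then to use Lemma~\ref{lem_1} and Lemma~\ref{lem_2} to translate the condition $F_s(B)=O$ into a condition on the roots of $f_s$. First I would observe that every $B\in M_2(\mathbb R)$ is similar over $\mathbb R$ to exactly one of $D(p,q)$ with $p\le q$, $J(p)$, or $R(a,b)$ with $b>0$ — this requires a small argument that the three families are mutually non-similar (different Jordan forms over $\mathbb C$, via Proposition~\ref{auxprop_3} for the $R$ versus $D$ cases) and that within each family the displayed parameters are uniquely determined by the similarity class (for $D(p,q)$ up to swapping $p,q$, which is why we impose $p\le q$; for $R(a,b)$ the eigenvalues $a\pm ib$ are determined, so requiring $b>0$ pins down $(a,b)$). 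Hence $M_2(\mathbb R)$ is the disjoint union of the conjugacy classes $S(D(p,q))$ ($p\le q$), $S(J(p))$, and $S(R(a,b))$ ($b>0$), and it suffices to determine which of these classes lie in $S$.

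Next I would invoke Lemma~\ref{lem_2}: if one matrix in a conjugacy class satisfies $F_s=O$, then the whole class does, and conversely; so each class is either entirely contained in $S$ or disjoint from it. It therefore remains to test a single representative of each class. By Lemma~\ref{lem_1}, $F_s(D(p,q))=O$ iff $f_s(p)=f_s(q)=0$, i.e. iff both $p$ and $q$ are real roots of $(4)$; $F_s(J(p))=O$ iff $f_s(p)=f_s'(p)=0$, i.e. iff $p$ is a real root of $(4)$ of multiplicity $\ge 2$; and $F_s(U(a,b))=O$ iff $f_s(a+ib)=f_s(a-ib)=0$, which by Proposition~\ref{auxprop_3} together with Lemma~\ref{lem_2} is equivalent to $F_s(R(a,b))=O$, i.e. to $a\pm ib$ being (necessarily conjugate) complex roots of $(4)$. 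Note that since $f_s$ has real coefficients, $f_s(a+ib)=0$ automatically gives $f_s(a-ib)=0$, so a single complex root $a+ib$ with $b>0$ suffices.

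Assembling these equivalences: $S$ is the union, over all real roots $p$ of $(4)$, of $S(D(p,p))$; over all real roots $p$ of multiplicity $\ge 2$, of $S(J(p))$; over all pairs of distinct real roots $p<q$, of $S(D(p,q))$; and over all complex roots $a+ib$ with $b>0$, of $S(R(a,b))$. Disjointness of the listed pieces is exactly the disjointness of the distinct conjugacy classes noted in the first step (the case $p=q$ of $D$ being singled out as $S(D(p,p))=\{D(p,p)\}$, which is disjoint from every $S(D(p,q))$ with $p<q$ and from every $S(J(p))$ since $D(p,p)$ is already diagonal while $J(p)$ is not diagonalizable). I expect the only genuinely delicate point to be the bookkeeping in the first step — verifying that the three families exhaust $M_2(\mathbb R)$ without overlap and that the parameter ranges $p\le q$ and $b>0$ give a genuine set of representatives — but this is entirely standard Jordan-form theory over $\mathbb R$ and $\mathbb C$ (see \cite{L}), so the proof is essentially a matter of combining Propositions~\ref{auxprop_1}, \ref{auxprop_3} with Lemmas~\ref{lem_1}, \ref{lem_2} as indicated.
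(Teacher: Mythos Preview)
Your proposal is correct and follows essentially the same approach as the paper: the paper's own proof consists of the single sentence ``Using Propositions~\ref{auxprop_1}, \ref{auxprop_3}, and Lemmas~\ref{lem_1}, \ref{lem_2} we get the following theorem,'' which is exactly the combination you spell out in detail. Your additional bookkeeping on disjointness of the conjugacy classes and the normalization $p\le q$, $b>0$ is more explicit than what the paper provides, but it is the intended argument.
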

  
  \begin{cor}\label{cor_sigma_compact}The solution set $S$ of the equation $(3)$ is $\sigma$-compact.
  \end{cor}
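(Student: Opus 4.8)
The plan is to deduce this immediately from Theorem \ref{character_theorem} together with Lemma \ref{fibres}. First I would observe that the polynomial $f_s(x)$ has degree $n$, so over $\mathbb R$ it has at most $n$ distinct real roots and at most $\lfloor n/2\rfloor$ distinct complex-conjugate pairs of non-real roots. Consequently the index sets appearing in Theorem \ref{character_theorem} --- the real roots $p$, the real roots $p$ of multiplicity $\geq 2$, the pairs $p<q$ of real roots, and the non-real roots $a+ib$ with $b>0$ --- are all finite, and hence $S$ is a \emph{finite} disjoint union of sets of the four types $S(D(p,p))$, $S(J(p))$, $S(D(p,q))$, $S(R(a,b))$.

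Next I would note that each of these four types of sets is, by definition, of the form $Conj_B(GL_2(\mathbb R))$ for the appropriate matrix $B$ among $D(p,p)$, $J(p)$, $D(p,q)$, $R(a,b)$. By Lemma \ref{fibres}, every such image $Conj_B(GL_2(\mathbb R))$ is $\sigma$-compact.

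Finally I would invoke the elementary fact that a finite union of $\sigma$-compact subspaces of a metric space is again $\sigma$-compact: writing each of the finitely many pieces as a countable union of compact sets and collecting these yields a countable family of compact sets whose union is $S$. This completes the argument. No step here presents a genuine obstacle; the only point that needs care is the finiteness of the number of roots of $f_s$, which is exactly what turns the union in Theorem \ref{character_theorem} into a finite one.
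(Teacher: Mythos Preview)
Your proposal is correct and matches the paper's approach: the corollary is stated without proof, as an immediate consequence of Theorem~\ref{character_theorem} and Lemma~\ref{fibres}, and your argument spells out exactly this deduction. The finiteness observation you add is correct and helpful, though even a countable union of $\sigma$-compact sets would suffice.
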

   Let $f_s(x) = g_s(x) h_s(x)$, where $g_s(x)$, $h_s(x)$ are polynomials  with real coefficients such that the equation $g_s(x) = 0$ has only real roots and the equation $h_s(x) = 0$ has only roots from $\mathbb C\backslash \mathbb R.$
  
   \begin{cor} The solution set of $F_s(X) =O$ is the disjoint union of the solution sets of $G_s(X) = O$ and $H_s(X) = O$.
  \end{cor}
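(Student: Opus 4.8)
The plan is to deduce the corollary directly from Theorem \ref{character_theorem} together with Lemma \ref{lem_1} and Lemma \ref{lem_2}. The key observation is that the factorization $f_s(x) = g_s(x) h_s(x)$ is a factorization into the "real-root part" and the "complex-root part," so a real number $p$ is a root of $f_s$ iff it is a root of $g_s$, and $p$ is a root of $f_s$ of multiplicity $\geq 2$ iff it is a root of $g_s$ of multiplicity $\geq 2$ (since $h_s$ has no real roots, it contributes nothing to the real root multiplicities). Likewise, $a \pm ib$ with $b > 0$ is a complex root of $f_s$ iff it is a root of $h_s$.

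First I would set up notation: write $G_s(X)$ and $H_s(X)$ for the matrix polynomials associated with $g_s$ and $h_s$, and recall from Lemma \ref{lem_1} that the scalars $f_s(p), f'_s(p), f_s(a \pm ib)$ control membership of the canonical matrices $D(p,q), J(p), R(a,b)$ (via $U(a,b)$) in the solution set. Then I would apply Theorem \ref{character_theorem} three times — to $f_s$, to $g_s$, and to $h_s$ — to express each of the three solution sets as a disjoint union of pieces of the form $S(D(p,p))$, $S(J(p))$, $S(D(p,q))$, $S(R(a,b))$. Using the equivalences above, the pieces appearing in the decomposition of the solution set of $F_s(X)=O$ indexed by real roots (or double real roots, or ordered pairs of distinct real roots) are exactly the pieces appearing in the decomposition of the solution set of $G_s(X)=O$, and the pieces indexed by complex roots are exactly those in the decomposition of the solution set of $H_s(X)=O$. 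Hence the solution set of $F_s(X)=O$ is the union of the solution sets of $G_s(X)=O$ and $H_s(X)=O$.

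It remains to check disjointness. The solution set of $G_s(X)=O$ consists only of matrices similar (over $\mathbb R$) to some $D(p,q)$ or $J(p)$ with $p, q$ real, i.e.\ matrices with only real eigenvalues, while the solution set of $H_s(X)=O$ consists only of matrices similar to some $R(a,b)$ with $b \ne 0$, i.e.\ matrices with non-real eigenvalues; these two classes are disjoint. (Equivalently, disjointness is already built into the statement of Theorem \ref{character_theorem} applied to $f_s$, since each solution of $F_s(X)=O$ lies in exactly one of the listed pieces, and that piece belongs to either the $G_s$-list or the $H_s$-list but not both.)

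I do not expect any real obstacle here; the only point requiring a word of care is the multiplicity bookkeeping — namely that passing from $f_s$ to the factor $g_s$ does not change the multiplicity of a real root, which is immediate because $h_s$ has no real roots and so $h_s(p) \ne 0$ for every real $p$. Once that is noted, the corollary follows by matching the four types of pieces in the three applications of Theorem \ref{character_theorem}.
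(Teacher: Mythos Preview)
Your proposal is correct and follows exactly the approach the paper intends: the corollary is stated without proof as an immediate consequence of Theorem~\ref{character_theorem}, and your argument---applying that theorem to $f_s$, $g_s$, $h_s$ separately and matching the pieces via the observation that $g_s$ carries all the real roots (with their multiplicities) and $h_s$ all the non-real ones---is precisely the intended justification. Your remark on disjointness (real versus non-real eigenvalues, or equivalently the disjointness already built into Theorem~\ref{character_theorem}) is also the right way to close the argument.
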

  Let $h_s(x) = \prod_{i=1}^k h_{is}(x)$, where $h_{is}(x), i \leq k,$ are polynomials of degree $2$ with real coefficients, and the equation $h_s(x) = 0$ has only simple roots. 

 \begin{cor} The solution set of $H_s(X) =O$ is the disjoint union of the solution sets of $H_{is}(X) = O, i \leq k$.
  \end{cor}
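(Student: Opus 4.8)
The plan is to use the previous corollary together with Theorem~\ref{character_theorem} applied to the individual quadratic factors. First I would observe that since $h_s(x) = \prod_{i=1}^k h_{is}(x)$ and each $h_{is}$ has a pair of simple complex-conjugate roots, and since all the roots of $h_s$ are simple, the root sets of the factors $h_{is}$ are pairwise disjoint. Thus if $a_i + ib_i$ with $b_i > 0$ denotes the complex root of $h_{is}$, then the matrices $a_i$, $b_i$ are pairwise distinct as pairs, and in particular $S(R(a_i,b_i)) \cap S(R(a_j,b_j)) = \emptyset$ for $i \ne j$ (two matrices with distinct pairs of non-real eigenvalues cannot be similar, so their conjugacy classes are disjoint).

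Next I would apply Theorem~\ref{character_theorem} to the equation $H_s(X) = O$ itself: since $h_s$ has no real roots and only simple complex roots, its solution set is exactly $\bigcup_{i=1}^k S(R(a_i,b_i))$, a disjoint union. On the other hand, applying the same theorem to each $H_{is}(X) = O$ shows that the solution set of $H_{is}(X) = O$ is precisely $S(R(a_i,b_i))$. Combining these two facts yields that the solution set of $H_s(X) = O$ is the disjoint union of the solution sets of the $H_{is}(X) = O$, $i \le k$.

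One should double-check the one genuinely substantive point, namely that a matrix $B$ satisfies $H_s(B) = O$ if and only if it satisfies some $H_{is}(B) = O$. The ``if'' direction is immediate since $h_{is}$ divides $h_s$, so $H_{is}(B) = O$ gives $H_s(B) = O$. For the ``only if'' direction, by Proposition~\ref{auxprop_1} the matrix $B$ is similar over $\mathbb{R}$ to $D(p,q)$, $J(p)$, or $R(a,b)$; by Lemmas~\ref{lem_1} and~\ref{lem_2} and the fact that $h_s$ has no real root, the first two cases are impossible, so $B$ is similar to some $R(a,b)$ with $a \pm ib$ a (necessarily non-real) root of $h_s$, hence a root of exactly one $h_{is}$, and then $B \in S(R(a_i,b_i))$, so $H_{is}(B) = O$. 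The main obstacle, such as it is, is just bookkeeping: making sure the disjointness of the conjugacy classes $S(R(a_i,b_i))$ is correctly justified via distinctness of eigenvalue pairs, but this is routine given the earlier lemmas.
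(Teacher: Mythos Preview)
Your proposal is correct and follows exactly the approach implicit in the paper: the corollary is stated there without proof as an immediate consequence of Theorem~\ref{character_theorem}, and you have simply spelled out the details of that derivation. The one extra verification you include (that $H_s(B)=O$ iff some $H_{is}(B)=O$, and that the conjugacy classes $S(R(a_i,b_i))$ are disjoint) is precisely what Theorem~\ref{character_theorem} encodes, so nothing new is needed.
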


Let $h_s(x) = (r_s(x))^k, k \geq 2$, where $r_s(x)$ is a polynomial of degree $2$ with real coefficients.

\begin{cor} The solution set of $H_s(X) =O$ is equal to the solution set of $R_s(X) = O$.
  \end{cor}

%%%%%%%%%%%%%%%%%%%%%%%%%%%%%%%%%%%%%%%%%%%%%%%%%%%%%%%%%%%%%
\section{Dimension of $S$}
%%%%%%%%%%%%%%%%%%%%%%%%%%%%%%%%%%%%%%%%%%%%%%%%%%%%%%%%%%%%

Let $X = \left(\begin{array}{rr}  a & b \\ c & d  \end{array}\right)$ be any matrix from $M_2(\mathbb R).$ Note that 
$$X^2 = \left(\begin{array}{rr}  (a^2+bc) & b(a+d) \\ c(a+d) & (bc+d^2)  \end{array}\right).$$
Consider the  equation 
$$X^2 +a_1 \cdot X + a_0 \cdot I = O.\eqno{(5)}$$

\begin{lem} \label{lem_3} Let $a_1 = 0$ and $S$ be the solution set of $(5)$.
\begin{itemize}
\item[(A)] If $a_0 < 0$ then $S$ is equal to the union
$S^A_0 \cup S^A_1 \cup S^A_2$, where 

$$S^A_0 = \{\left(\begin{array}{rr}  \sqrt{|a_0|} & 0 \\ 0 & \sqrt{|a_0|}  \end{array}\right),
\left(\begin{array}{rr}  -\sqrt{|a_0|} & 0 \\ 0 & -\sqrt{|a_0|}  \end{array}\right)\},$$
$$S^A_1 = \{\left(\begin{array}{rr}  \sqrt{|a_0|-bc} & b \\ c & -\sqrt{|a_0|-bc}  \end{array}\right): (b,c) \in A \subseteq \mathbb R^2 \},$$
$$S^A_2 = \{\left(\begin{array}{rr}  -\sqrt{|a_0|-bc} & b \\ c & \sqrt{|a_0|-bc}  \end{array}\right): (b,c) \in A \subseteq \mathbb R^2 \}$$ and $A = \{(b,c) \in \mathbb R^2: |a_0| \geq bc \}$.

\item[(B)] If $a_0 = 0$ then $S$  is equal to the union
$S^B_1 \cup S^B_2$, where 
$$S^B_1 = \{\left(\begin{array}{rr}  \sqrt{-bc} & b \\ c & -\sqrt{-bc}  \end{array}\right): (b,c) \in B \subseteq \mathbb R^2 \},$$
$$S^B_2 = \{\left(\begin{array}{rr}  -\sqrt{-bc} & b \\ c & \sqrt{-bc}  \end{array}\right): (b,c) \in B \subseteq \mathbb R^2 \}$$ and $B= \{(b,c) \in \mathbb R^2: 0 \geq bc \}$.

\item[(C)] If $a_0 > 0$ then $S$  is equal to the union
$S^C_1 \cup S^C_2$, where 
$$S^C_1 = \{\left(\begin{array}{rr}  \sqrt{-a_0-bc} & b \\ c & -\sqrt{-a_0-bc}  \end{array}\right): (b,c) \in C \subseteq \mathbb R^2 \},$$
$$S^C_2 = \{\left(\begin{array}{rr}  -\sqrt{-a_0-bc} & b \\ c & \sqrt{-a_0-bc}  \end{array}\right): (b,c) \in C \subseteq \mathbb R^2 \}$$ and $C = \{(b,c) \in \mathbb R^2: -a_0 \geq bc \}$.
\end{itemize}
Moreover, the solutions sets of (A), (B) and (C) are noncompact and topologically different. However, $\dim S = 2$ for all three cases.
\end{lem}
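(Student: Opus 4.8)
The plan is to read off everything from the explicit descriptions in (A), (B), (C) together with the two properties of $\dim$ recalled in the introduction. First I would record the basic building blocks: in each case the assignment sending $(b,c)$ to the displayed matrix of $S^A_i$ (resp.\ $S^B_i$, $S^C_i$) is a continuous bijection of $A$ (resp.\ $B$, $C$) onto that piece whose inverse is the restriction of the projection $\left(\begin{smallmatrix} a & b\\ c & d\end{smallmatrix}\right)\mapsto (b,c)$, hence a homeomorphism; and each such graph is closed in $M_2(\mathbb R)$ because $A$, $B$, $C$ are closed in $\mathbb R^2$ and the two remaining entries $\pm\sqrt{\,\cdot\,}$ depend continuously on $(b,c)$ there. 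Noncompactness of $S$ is then immediate in all three cases: the pair $(b,0)$ lies in $A$, in $B$ and in $C$ for every $b\in\mathbb R$, so $S$ always contains the unbounded family of matrices with first row $(\ast,b)$ and vanishing lower-left entry.

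For $\dim S=2$ I would prove the two inequalities separately. For $\dim S\le 2$: in each case $S$ is a finite union of the closed pieces listed in (A)/(B)/(C); $S^A_0$ is a two-point set, hence $0$-dimensional, while each of $S^A_1,S^A_2,S^B_1,S^B_2,S^C_1,S^C_2$ is homeomorphic to a subspace of $\mathbb R^2$ and so has covering dimension $\le 2$ by the monotone theorem, and the (finite case of the) countable sum theorem gives $\dim S\le 2$. For $\dim S\ge 2$: each of $A$, $B$, $C$ has nonempty interior in $\mathbb R^2$ (a neighbourhood of the origin lies in $A$ and in $B$; the interior of the part of $C$ in the open second quadrant is a nonempty open subset of $\mathbb R^2$), so $S$ contains a homeomorphic copy of an open $2$-disk, whence $\dim S\ge 2$, again by the monotone theorem. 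Combining, $\dim S=2$ in all three cases.

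Finally, the three spaces are pairwise non-homeomorphic because they have, respectively, $3$, $1$ and $2$ connected components. In case (A) the off-diagonal equations $b(a+d)=c(a+d)=0$ force every solution either to be one of the two scalar matrices $\pm\sqrt{|a_0|}\,I$ or to have trace $0$; since the trace is continuous and takes the three distinct values $2\sqrt{|a_0|}$, $-2\sqrt{|a_0|}$, $0$ on $S$, the set $S$ splits into the two singletons $\{\sqrt{|a_0|}\,I\}$, $\{-\sqrt{|a_0|}\,I\}$ and the trace-zero part $S^A_1\cup S^A_2$; the latter is connected because $A$ is star-shaped about the origin (so $S^A_1$ and $S^A_2$ are connected) and $S^A_1\cap S^A_2\neq\varnothing$, namely along $\{bc=|a_0|\}$ — hence exactly $3$ components. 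In case (B), $B$ is star-shaped about the origin and $S^B_1$, $S^B_2$ meet along $\{bc=0\}$, so $S$ is connected. In case (C), $C=\{bc\le -a_0\}$ is already the disjoint union of its two convex pieces in the open second and fourth quadrants; $S^C_1$ and $S^C_2$ each inherit this $2$-component splitting, the two ``second-quadrant halves'' get glued along $\{bc=-a_0\}$ (likewise the two ``fourth-quadrant halves''), but the second- and fourth-quadrant parts remain separated by $\{b=0\}$ — hence exactly $2$ components.

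The only genuinely delicate step is this component analysis: spotting that in case (A) the two scalar solutions are isolated points of $S$ (they are precisely the solutions of nonzero trace), and doing the bookkeeping of how $S^\bullet_1$ and $S^\bullet_2$ are glued along $S^\bullet_1\cap S^\bullet_2$ without merging components one does not want merged. Everything else — the homeomorphism onto graphs, closedness, and the two dimension estimates — is routine.
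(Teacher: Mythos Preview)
Your treatment of the ``Moreover'' claims is essentially the paper's argument, only spelled out more carefully: both of you observe that each piece $S^\bullet_i$ is the graph of a continuous function over the closed $2$-dimensional parameter region $A$, $B$ or $C$, hence homeomorphic to it and closed in $M_2(\mathbb R)$, invoke the sum theorem for $\dim S\le 2$, and note unboundedness for noncompactness. In one respect you actually go further than the paper: its proof never justifies the assertion that the three solution sets are topologically different, whereas your component count ($3$, $1$, $2$ components respectively) does.

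What is missing from your write-up is the first and main assertion of the lemma, namely that $S$ \emph{equals} the displayed unions. The paper's proof does this by writing $X^2+a_0I=O$ entrywise as the system $a^2+bc=-a_0$, $b(a+d)=0$, $c(a+d)=0$, $d^2+bc=-a_0$ and splitting on whether $a+d\ne 0$ (forcing $b=c=0$, whence the two scalar solutions in case~(A) and a contradiction in cases~(B) and~(C)) or $a+d=0$ (giving the two-parameter families). You quote one of these off-diagonal equations later, inside your component analysis for~(A), so you clearly see the mechanism; but the derivation itself is absent, and without it the lemma is not proved.

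One small slip: $B=\{bc\le 0\}$ does \emph{not} contain a neighbourhood of the origin (for instance $(\varepsilon,\varepsilon)\notin B$). This does not affect your conclusion, since $B$ still has nonempty interior (any open quadrant with $bc<0$ works), but the sentence as written is incorrect.
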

\begin{proof}
Suppose that $ \left(\begin{array}{rr}  a & b \\ c & d  \end{array}\right)$ is in $S.$

(A) Note that
$$
\left\{\begin{array}{l} a^2+bc= |a_0| \\ b(a+d) = 0\\c(a+d) = 0 \\  d^2+bc= |a_0|\end{array}\right.
$$
If $a+d \ne 0$ then $b=c=0$. This implies that $a^2 = d^2 = |a_0|$ and then $a= d = \sqrt{|a_0|}$ or 
$a= d = -\sqrt{|a_0|}$. Thus we get the solution subset $S^A_0$. 

\noindent 
If $a+d = 0$ then $a= -d$. Observe that $a^2 = d^2 = |a_0| -bc \geq 0$. This implies
two  solution subsets $S^A_1$ and $S^A_2$.  So $S = S^A_0 \cup S^A_1 \cup S^A_2$.

Note that the set $A$ with $\dim A = 2$ is homeomorphic to each of sets $S^A_1$ and $S^A_2$. Moreover, the sets 
$S^A_1$ and $S^A_2$ are closed in $M_2(\mathbb R)$, and so is $S^A_0$. Therefore by the countable sum theorem for $\dim$ we have  $\dim S = 2.$

(B) Note that 
$$
\left\{\begin{array}{l} a^2+bc= 0 \\ b(a+d) = 0\\c(a+d) = 0 \\  d^2+bc= 0.  \end{array}\right.
$$
If $a+d \ne 0$ then $b=c=0$. This implies that $a =d =0$. We have a contradiction and no solutions.

\noindent 
If $a+d = 0$ then $a= -d$. Observe that $a^2 = d^2 =  -bc \geq 0$. This leads to
two  solution subsets $S^B_1$ and $S^B_2$, and consequently $S = S^B_1 \cup S^B_2$.

Note that the set $B$ with $\dim B = 2$ is homeomorphic to each of the sets $S^B_1$ and $S^B_2$. Moreover, the sets 
$S^B_1$ and $S^B_2$ are closed in $M_2(\mathbb R)$. Consequently $\dim S = 2.$

(C) Note that
$$
\left\{\begin{array}{l} a^2+bc= -a_0 \\ b(a+d) = 0\\c(a+d) = 0 \\  d^2+bc= -a_0.  \end{array}\right.
$$
If $a+d \ne 0$ then $b=c=0$. This implies that $a^2 = d^2 = -a_0 < 0$. Hence, we have no solution in this case.

\noindent 
If $a+d = 0$ then $a= -d$. Observe that $a^2 = d^2 = -a_0 -bc \geq 0$. This implies
two  solution subsets $S^C_1$ and $S^C_2$.  So $S = S^C_1 \cup S^C_2$.

Note that the set $C$ with $\dim C = 2$ is homeomorphic to each of the sets $S^C_1$ and $S^C_2$. Moreover, the sets 
$S^C_1$ and $S^C_2$ are closed in $M_2(\mathbb R)$. So  $\dim S = 2.$

Note also that the set $S$ is not compact for all three cases. 
\end{proof}

\begin{remark} The solution set $S$ in (A) is equal to the disjoint union 
$$S(D(-\sqrt{|a_0|}, -\sqrt{|a_0|})) \cup S(D(\sqrt{|a_0|}, \sqrt{|a_0|})) \cup
S(D(-\sqrt{|a_0|}, \sqrt{|a_0|})).$$ In particular, $\dim S(D(-\sqrt{|a_0|}, \sqrt{|a_0|})) =2.$

The solution set $S$ in (B) is equal to the  disjoint union 
$S(D(0, 0))) \cup S(J(0)).$ In particular, $\dim S(J(0)) =2.$

The solution set $S$ in (C) is equal to 
$S(R(0, \sqrt{|a_0|})).$ In particular, $\dim S(R(0, \sqrt{|a_0|})) =2.$
\end{remark}

Let $a_1 \ne 0$ in $(5)$.
Observe that $(5)$ is equivalent to the  equation 
$$(X + \frac{a_1}{2} \cdot I)^2 + (- \frac{a_1^2}{4} + a_0) \cdot I = O .\eqno{(6)}$$ 

Put $\alpha = - \frac{a_1^2}{4} + a_0$ and $Y = X +\frac{a_1}{2} \cdot I$. Then
$(6)$ can be written as 
$$Y^2 + \alpha \cdot I = O \eqno{(7)},$$ where $Y\in M_2(\mathbb R)$.

Note that $X =  \left(\begin{array}{rr}  (y_1 -  \frac{a_1}{2}) & y_2 \\ y_3 & (y_4 - \frac{a_1}{2})  \end{array}\right) $ if 
$Y =  \left(\begin{array}{rr}  y_1 & y_2 \\ y_3 & y_4  \end{array}\right)$.

Hence using Lemma \ref{lem_3} we get the following fact.

\begin{thm} \label{thm} The solution set $S$ of $(5)$ is a closed noncompact subset of $M_2(\mathbb R)$ with $\dim S = 2$ and it is equal to 
\begin{itemize}
\item[(A)] the union
$S^A_0 \cup S^A_1 \cup S^A_2$ if $\alpha < 0$, where 

$$S^A_0 = \{\left(\begin{array}{rr}  (\sqrt{-\alpha}-  \frac{a_1}{2}) & 0 \\ 0 & (\sqrt{-\alpha}-  \frac{a_1}{2})  \end{array}\right),
\left(\begin{array}{rr}  (-\sqrt{-\alpha}-  \frac{a_1}{2}) & 0 \\ 0 & (-\sqrt{-\alpha}-  \frac{a_1}{2})  \end{array}\right)\},$$
$$S^A_1 = \{\left(\begin{array}{rr}  (\sqrt{-\alpha-bc}- \frac{a_1}{2}) & b \\ c & (-\sqrt{-\alpha-bc}-  \frac{a_1}{2})  \end{array}\right): 
-\alpha \geq bc \},$$
$$S^A_2 = \{\left(\begin{array}{rr}  (-\sqrt{-\alpha-bc}-  \frac{a_1}{2}) & b \\ c & (\sqrt{-\alpha-bc}-  \frac{a_1}{2})  \end{array}\right): 
-\alpha \geq bc\};$$
\item[(B)] the union
$S^B_1 \cup S^B_2$ if $\alpha = 0$, where 
$$S^B_1 = \{\left(\begin{array}{rr}  (\sqrt{-bc}- \frac{a_1}{2}) & b \\ c & (-\sqrt{-bc}- \frac{a_1}{2})  \end{array}\right): 0 \geq bc\},$$
$$S^B_2 = \{\left(\begin{array}{rr}  (-\sqrt{-bc}- \frac{a_1}{2}) & b \\ c & (\sqrt{-bc}- \frac{a_1}{2})  \end{array}\right): 0 \geq bc \};$$
\item[(C)]  the union
$S^C_1 \cup S^C_2$ if $\alpha > 0$, where 
$$S^C_1 = \{\left(\begin{array}{rr}  (\sqrt{-\alpha-bc}- \frac{a_1}{2}) & b \\ c & (-\sqrt{-\alpha-bc}- \frac{a_1}{2})  \end{array}\right):  -\alpha \geq bc\},$$
$$S^C_2 = \{\left(\begin{array}{rr}  (-\sqrt{-\alpha-bc}- \frac{a_1}{2}) & b \\ c & (\sqrt{-\alpha-bc}- \frac{a_1}{2})  \end{array}\right):  -\alpha \geq bc \}.$$
\end{itemize}

\end{thm}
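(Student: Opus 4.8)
The plan is to reduce the case $a_1\ne 0$ to the already-solved case $a_1=0$ of Lemma \ref{lem_3} by means of the affine change of variable $Y = X + \tfrac{a_1}{2}\cdot I$, which is exactly the substitution recorded in equations $(6)$ and $(7)$ above. First I would observe that $B\mapsto B+\tfrac{a_1}{2}\cdot I$ is a translation of $M_2(\mathbb R)$, hence a homeomorphism (in fact an affine isometry for the Euclidean norm), and that it maps the solution set of $(5)$ bijectively onto the solution set of $(7)$: indeed $X$ solves $(5)$ iff $Y=X+\tfrac{a_1}{2}I$ solves $(7)$, because $(5)$ and $(6)$ are literally the same equation rearranged, and $(6)$ is $(7)$ after setting $\alpha = -\tfrac{a_1^2}{4}+a_0$. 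This is all elementary algebra; no obstacle here.

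Next I would apply Lemma \ref{lem_3} to equation $(7)$, whose parameter plays the role of $a_0$ in the lemma, i.e.\ with ``$a_0$'' replaced by $\alpha$. Splitting into the three cases $\alpha<0$, $\alpha=0$, $\alpha>0$, Lemma \ref{lem_3} describes the $Y$-solution set explicitly as $S^A_0\cup S^A_1\cup S^A_2$, $S^B_1\cup S^B_2$, or $S^C_1\cup S^C_2$ respectively, with each piece parametrized by $(b,c)$ ranging over the appropriate half-plane-type region and with entries involving $\sqrt{-\alpha-bc}$ (or $\sqrt{-bc}$). Then I would simply transport these formulas back through $X = Y - \tfrac{a_1}{2}I$, using the entrywise identity $X = \left(\begin{array}{rr} y_1-\tfrac{a_1}{2} & y_2\\ y_3 & y_4-\tfrac{a_1}{2}\end{array}\right)$ recorded just above the theorem: this subtracts $\tfrac{a_1}{2}$ from the two diagonal entries of every matrix in every $S$-piece and leaves the off-diagonal entries $b,c$ and the region of parameters unchanged. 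The displayed sets $S^A_0,S^A_1,S^A_2,S^B_1,\dots,S^C_2$ in the statement are exactly these images, so the description follows.

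For the topological conclusions: since translation by $\tfrac{a_1}{2}I$ is a homeomorphism of $M_2(\mathbb R)$, it preserves closedness, noncompactness, and covering dimension. Lemma \ref{lem_3} already gives that the $Y$-solution set is closed in $M_2(\mathbb R)$ (each $S^\bullet_i$ is closed, a finite union of closed sets is closed), noncompact, and of covering dimension $2$ (each parametrizing region is homeomorphic to a $2$-dimensional subset of $\mathbb R^2$, the pieces are closed, and the countable --- here finite --- sum theorem (i) gives $\dim = 2$; the monotone theorem is not even needed). Transporting along the homeomorphism preserves all three properties, giving that $S$ is a closed noncompact subset of $M_2(\mathbb R)$ with $\dim S = 2$.

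I do not anticipate any real obstacle: the entire argument is ``apply a homeomorphic change of variables to a lemma already proved.'' The only points requiring a line of care are (a) checking that the substitution $(5)\Leftrightarrow(7)$ is genuinely a bijection of solution sets and not merely an implication --- this is immediate since $X\mapsto X+\tfrac{a_1}{2}I$ is invertible with inverse $Y\mapsto Y-\tfrac{a_1}{2}I$ --- and (b) making sure the parameter in Lemma \ref{lem_3} is matched correctly, namely the lemma's $a_0$ corresponds to the present $\alpha$, so that the three sign cases $\alpha<0,\,=0,\,>0$ line up with cases (A), (B), (C) of the lemma. Both are routine, so the proof is short.
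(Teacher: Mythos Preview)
Your proposal is correct and follows exactly the paper's approach: the paper simply records the substitution $Y=X+\tfrac{a_1}{2}I$ in $(6)$--$(7)$ and then states the theorem as an immediate consequence of Lemma~\ref{lem_3}, with no further argument. Your write-up is in fact more detailed than the paper's, which just says ``Hence using Lemma~\ref{lem_3} we get the following fact.''
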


\begin{cor} \label{cor_about_S}$\dim S(D(p,q)) = 2$, $\dim S(J(p)) = 2$, $S(R(a,b)) = 2.$ In addition to it, $S(D(p,q))$, $S(J(p))$ and $S(R(a,b))$ contain subsets homeomorphic to $\mathbb R^2$.
\end{cor}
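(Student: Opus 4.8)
The plan is, for each of the three kinds of matrix, to pick a monic quadratic $f_s$ whose associated equation of the form $(5)$ has that matrix among its solutions, apply Theorem \ref{thm} to this equation, and then extract both the dimension and an explicit copy of $\mathbb R^2$ from the description given there. Throughout we use the monotone theorem for $\dim$ and the elementary facts that a nonempty open subset of $\mathbb R^2$ has covering dimension $2$ and contains an open disc homeomorphic to $\mathbb R^2$.

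We first treat $S(D(p,q))$ with $p\ne q$; relabelling, assume $p<q$ and take $f_s(x)=(x-p)(x-q)=x^2-(p+q)x+pq$. In the notation of Theorem \ref{thm} we then have $\alpha=pq-\tfrac{(p+q)^2}{4}=-\tfrac{(p-q)^2}{4}<0$, so part (A) applies: the solution set $S$ of $X^2-(p+q)X+pqI=O$ has $\dim S=2$ and $S=S^A_0\cup S^A_1\cup S^A_2$. A short computation shows that every matrix listed in $S^A_1$ has trace $p+q$ and determinant $pq$, hence characteristic polynomial $(x-p)(x-q)$, hence two distinct real eigenvalues, hence is similar over $\mathbb R$ to $D(p,q)$ by Proposition \ref{auxprop_1}; thus $S^A_1\subseteq S(D(p,q))$, while $S(D(p,q))\subseteq S$ by the Cayley--Hamilton identity. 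Reading off the off-diagonal entries identifies $S^A_1$ homeomorphically with the set $\{(b,c)\in\mathbb R^2:bc\le\tfrac{(q-p)^2}{4}\}$, which has nonempty interior; the image of an open disc contained in it is a subset $D\subseteq S^A_1\subseteq S(D(p,q))$ homeomorphic to $\mathbb R^2$, and then $2=\dim D\le\dim S(D(p,q))\le\dim S=2$, so $\dim S(D(p,q))=2$ and $S(D(p,q))\supseteq D\cong\mathbb R^2$.

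The argument for $S(J(p))$ is parallel with $f_s(x)=(x-p)^2$, for which $\alpha=0$ and Theorem \ref{thm}(B) gives $\dim S=2$ and $S=S^B_1\cup S^B_2$; every matrix in $S^B_1$ has characteristic polynomial $(x-p)^2$ and equals $pI$ precisely when both off-diagonal entries vanish, so each of the remaining matrices is non-scalar with sole eigenvalue $p$ and is similar to $J(p)$ by Proposition \ref{auxprop_1}; hence $S^B_1\setminus\{pI\}\subseteq S(J(p))\subseteq S$ and $S^B_1\setminus\{pI\}$ is homeomorphic to $\{(b,c):bc\le 0\}\setminus\{(0,0)\}$, which contains the open second quadrant, a copy of $\mathbb R^2$; as before this gives $\dim S(J(p))=2$ and a copy of $\mathbb R^2$ in $S(J(p))$. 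For $S(R(a,b))$ with $b\ne 0$, conjugating by $\mathrm{diag}(1,-1)$ shows $S(R(a,b))=S(R(a,-b))$, so we may take $b>0$ and $f_s(x)=(x-a)^2+b^2$; then $\alpha=b^2>0$ and Theorem \ref{thm}(C) gives $\dim S=2$ and $S=S^C_1\cup S^C_2$. Since $(x-a)^2+b^2$ is irreducible over $\mathbb R$, it is the minimal, hence the characteristic, polynomial of every solution of $X^2-2aX+(a^2+b^2)I=O$, so by Propositions \ref{auxprop_1} and \ref{auxprop_3} every such solution is similar over $\mathbb R$ to $R(a,b)$; thus $S=S(R(a,b))$, whence $\dim S(R(a,b))=2$, and the image in $S^C_1$ of an open disc contained in the index set $\{(u,v):uv\le -b^2\}$ is a copy of $\mathbb R^2$ in $S(R(a,b))$.

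The only step that is not purely mechanical is the identification of the explicit pieces $S^A_1$, $S^B_1$, $S^C_1$ of Theorem \ref{thm} with the relevant conjugacy orbits; this reduces to computing the trace and determinant of the matrices listed there --- which determines their characteristic polynomial --- and invoking the classification of $(2\times 2)$-matrices up to real similarity (Propositions \ref{auxprop_1} and \ref{auxprop_3}). Granting that, the dimension equalities and the copies of $\mathbb R^2$ follow at once from the monotone theorem.
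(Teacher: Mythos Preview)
Your argument is correct and follows the same overall strategy as the paper: for each of $D(p,q)$, $J(p)$, $R(a,b)$ choose the obvious monic quadratic $(X-pI)(X-qI)$, $(X-pI)^2$, $(X-aI)^2+b^2I$ and invoke Theorem~\ref{thm}. The only real difference is in how the conjugacy-class identification is made: the paper appeals to Theorem~\ref{character_theorem} to decompose the solution set $S$ as a disjoint union of orbits and then reads off $\dim S(D(p,q))=2$ from $\dim S=2$ together with $\dim S(D(p,p))=\dim S(D(q,q))=0$, whereas you compute the trace and determinant of the matrices in $S^A_1$, $S^B_1$, $S^C_1$ directly to place them in the right orbit and then sandwich the dimension. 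Your route has the advantage of making the ``copy of $\mathbb R^2$'' claim completely explicit (the paper's proof does not spell this out), at the cost of a few extra lines of computation; the paper's route is shorter because Theorem~\ref{character_theorem} has already done the orbit bookkeeping.
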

\begin{proof}\label{dimension of S} Consider the equation $(X-pI)\cdot (X-qI) =O$ with $p < q$.
By Theorem \ref{character_theorem} the solution set $S$ of this  equation 
is the disjoint union of $S(D(p,p))$, $S(D(q,q))$ and $S(D(p,q))$.
Since $\dim S = 2$ we have $\dim S(D(p,q)) = 2$.
Similarly one can prove that $\dim S(J(p)) = 2$ and $S(R(a,b)) = 2$, using equations 
$$(X-pI)^2=O\mbox{ and } (X-aI)^2+b^2I = O,$$ respectively.
\end{proof}

Theorem \ref{character_theorem} and Corollaries \ref{cor_sigma_compact}, \ref{cor_about_S} imply
\begin{thm} \label{theorem_dim} $\dim S = 2$.
\end{thm}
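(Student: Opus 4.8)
The plan is to combine the structural description of $S$ from Theorem \ref{character_theorem} with the dimension computations already in hand. By Theorem \ref{character_theorem}, the solution set $S$ of $(3)$ is a disjoint union of finitely many pieces, each of one of the four types $S(D(p,p))$, $S(J(p))$, $S(D(p,q))$ with $p<q$, or $S(R(a,b))$ with $b>0$. The finiteness is because the real polynomial $f_s$ of degree $n$ has at most $n$ distinct real roots and at most $n/2$ distinct conjugate pairs of non-real roots, so there are only finitely many indices to consider. First I would record that each such piece is closed in $M_2(\mathbb R)$: indeed $S$ itself is closed (it is the preimage of $O$ under the continuous map $F_s$), and each piece is the intersection of $S$ with the solution set of a suitable quadratic or linear equation, which is again closed; alternatively one sees directly that $S(D(p,q))$, $S(J(p))$, $S(R(a,b))$ arise as the full solution sets described in Theorem \ref{thm}, hence are closed.

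Next I would invoke the dimension bounds. From the Proposition in the introduction, $\dim S \leq 3$; we want the exact value $2$. For the upper bound $\dim S \leq 2$, I would apply the countable (here, finite) sum theorem (i): $S$ is a finite union of closed subsets, so it suffices to show $\dim \leq 2$ for each piece. The pieces $S(D(p,p)) = \{D(p,p)\}$ are singletons, hence $0$-dimensional. For the remaining three types, Corollary \ref{cor_about_S} gives $\dim S(D(p,q)) = \dim S(J(p)) = \dim S(R(a,b)) = 2$ exactly. Thus every piece has dimension $\leq 2$, and the sum theorem yields $\dim S \leq 2$. For the lower bound, I would use the monotone theorem (ii): if $S$ is nonempty and contains at least one piece of type $S(D(p,q))$, $S(J(p))$, or $S(R(a,b))$, then by Corollary \ref{cor_about_S} that piece contains a subset homeomorphic to $\mathbb R^2$, so $\dim S \geq \dim \mathbb R^2 = 2$, giving $\dim S = 2$.

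The one genuine subtlety — and the step I expect to be the main obstacle — is the degenerate case in which $S$ consists only of singleton pieces $S(D(p,p))$, i.e. when $f_s$ has no repeated real root, no pair of distinct real roots, and no non-real root. Over the reals a degree-$n$ polynomial with $n\ge 2$ always has at least one non-real root or at least two distinct real roots or a repeated real root, unless it is a power $(x-r)^n$ of a single linear factor with $r$ real, in which case the only relevant root is $r$ with multiplicity $n \ge 2$, producing the piece $S(J(r))$ of dimension $2$. Working through this case analysis carefully shows that for every admissible $f_s$ with $n \geq 2$, at least one $2$-dimensional piece is present, so the lower bound always applies. (One should also note the corner case $S = \emptyset$ cannot occur: if $f_s$ has a non-real root $a \pm ib$ then $R(a,b) \in S$, and if all roots are real then $D(p,q)$ for roots $p,q$, possibly equal, lies in $S$ by Lemma \ref{lem_1}; in all cases $S \neq \emptyset$.)

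Assembling these observations, the proof is short: $S$ is a finite union of closed sets each of dimension $\le 2$, so $\dim S \le 2$ by the sum theorem; and $S$ contains a copy of $\mathbb R^2$ by the structural analysis together with Corollary \ref{cor_about_S}, so $\dim S \ge 2$ by the monotone theorem. Hence $\dim S = 2$, which is the assertion of Theorem \ref{theorem_dim}. This is precisely the route the paper indicates by citing Theorem \ref{character_theorem} together with Corollaries \ref{cor_sigma_compact} and \ref{cor_about_S}.
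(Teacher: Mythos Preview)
Your approach is essentially the paper's: decompose $S$ via Theorem~\ref{character_theorem} into finitely many pieces of dimension $\le 2$, apply the countable sum theorem for the upper bound, and use Corollary~\ref{cor_about_S} plus monotonicity for the lower bound. The case analysis ensuring a $2$-dimensional piece always appears when $n\ge 2$ is correct and nicely handled.

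One small slip: the conjugacy class $S(J(p))$ is \emph{not} closed in $M_2(\mathbb R)$ --- conjugating $J(p)$ by $\mathrm{diag}(1,t)$ and letting $t\to 0$ shows that $pI$ lies in its closure --- so neither of your two justifications (``intersection of $S$ with a quadratic solution set'' or ``full solution set from Theorem~\ref{thm}'') gives exactly $S(J(p))$; both give $S(J(p))\cup\{pI\}$. This is harmless for the dimension argument, since that union is closed and still $2$-dimensional, but it is precisely why the paper cites Corollary~\ref{cor_sigma_compact}: each piece is $\sigma$-compact (Lemma~\ref{fibres}), hence a countable union of compacta, and the countable sum theorem then applies without needing the pieces themselves to be closed.
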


%%%%%%%%%%%%%%%%%%%%%%%%%%%%%%%%%%%%%%%%%%%%%%%%%%%

\section{Some examples of solution sets}
%%%%%%%%%%%%%%%%%%%%%%%%%%%%%%%%%%%%%%%%%%%%%%%%%%

Here we will describe the solution sets of some equations. All facts in this section follow from the results of Sections~3 and~4.

\begin{prop} The solution set of $X^n =O, n \geq 2,$ is equal to the disjoint union of 
$S(D(0,0))$ and $S(J(0))$ which is  
the solution set of  $X^2 =O$.
\end{prop}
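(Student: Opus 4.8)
The goal is to show that the solution set of $X^n = O$ with $n \geq 2$ coincides with the solution set of $X^2 = O$, and in particular equals $S(D(0,0)) \cup S(J(0))$ as a disjoint union. The plan is to apply the structure theorem (Theorem \ref{character_theorem}) to the polynomial $f_s(x) = x^n$. First I would observe that $x^n = 0$ has exactly one real root, namely $x = 0$, and it has multiplicity $n \geq 2$; moreover $f_s$ has no roots in $\mathbb{C} \setminus \mathbb{R}$. Therefore, in the disjoint decomposition furnished by Theorem \ref{character_theorem}, the only contributing pieces are $S(D(0,0))$ (since $0$ is a real root) and $S(J(0))$ (since $0$ is a real root of multiplicity $\geq 2$); there are no pieces of the form $S(D(p,q))$ with $p < q$ (only one real root) and no pieces of the form $S(R(a,b))$ (no complex roots). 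Hence $S = S(D(0,0)) \sqcup S(J(0))$.

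Next I would identify this with the solution set of $X^2 = O$. The polynomial $x^2$ likewise has the single real root $0$ of multiplicity $2$ and no complex roots, so by the same application of Theorem \ref{character_theorem} its solution set is also $S(D(0,0)) \sqcup S(J(0))$. Thus the two solution sets coincide. Alternatively, and perhaps more transparently, one can note directly that a $(2 \times 2)$ matrix $B$ satisfies $B^n = O$ if and only if $B$ is nilpotent, if and only if $B^2 = O$ (since a nilpotent $(2\times 2)$ matrix has nilpotency index at most $2$, by Cayley--Hamilton or by the classification in Proposition \ref{auxprop_1}); this gives the equality of solution sets immediately and explains why the structure-theorem pieces match.

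I do not anticipate a serious obstacle here: the statement is essentially a direct specialization of Theorem \ref{character_theorem} combined with the elementary fact about nilpotent $(2\times 2)$ matrices. The only point requiring a line of care is the claim that a real root of multiplicity $n$ of $x^n$ is in particular a root of multiplicity $\geq 2$ for every $n \geq 2$, and that no other roots (real of multiplicity one, or complex) occur — both of which are immediate. So the proof is short: invoke Theorem \ref{character_theorem} for $f_s(x) = x^n$, read off that only the $S(D(0,0))$ and $S(J(0))$ summands survive, and note that the same reading applies to $x^2$, whence the two solution sets agree.
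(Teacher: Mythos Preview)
Your proposal is correct and matches the paper's approach: the paper states that all results in this section follow from Sections~3 and~4, i.e., they are direct applications of Theorem~\ref{character_theorem}, which is precisely what you do by specializing to $f_s(x)=x^n$ and $f_s(x)=x^2$. The alternative nilpotency argument you mention is a nice extra but not needed.
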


\begin{prop} The solution set of $(X^2 +I)^n=O, n \geq 1,$ is equal to 
$S(R(0,1)),$  which is  
the solution set of  $X^2  +I=O$.
\end{prop}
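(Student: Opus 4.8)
The plan is to apply Theorem \ref{character_theorem} directly, after identifying the scalar polynomial associated with the equation $(X^2+I)^n = O$. That polynomial is $f_s(x) = (x^2+1)^n$, whose complete root set in $\mathbb C$ consists of $i$ and $-i$, each with multiplicity $n$, and which has no real roots at all.

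First I would read off the components listed in Theorem \ref{character_theorem}. Since $(x^2+1)^n = 0$ has no real roots, none of the sets $S(D(p,p))$, $S(J(p))$, or $S(D(p,q))$ can occur: the first three families are indexed by real roots (the second by real roots of multiplicity $\geq 2$, the third by pairs of distinct real roots), and here there are none. The only surviving family is $S(R(a,b))$, indexed by complex roots $a+ib$ with $b>0$. The unique such root is $i = 0 + 1\cdot i$, so $a = 0$ and $b = 1$, and Theorem \ref{character_theorem} therefore yields $S = S(R(0,1))$. For the final clause I would apply the identical reasoning to $X^2 + I = O$, whose associated polynomial is $x^2+1$: its roots in $\mathbb C$ are again exactly $\pm i$ with no real roots, so its solution set is also $S(R(0,1))$, and comparing the two descriptions gives the asserted equality.

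The crux — and the only place where care is needed — is the observation that in Theorem \ref{character_theorem} the rotation component $S(R(a,b))$ is indexed by the mere \emph{presence} of a complex root $a+ib$, with no dependence on its multiplicity. This is exactly the feature that distinguishes the complex case from the real case, where passing from multiplicity $1$ to multiplicity $\geq 2$ introduces the extra Jordan component $S(J(p))$. Because $(x^2+1)^n$ and $x^2+1$ have identical root sets in $\mathbb C$, differing only in the multiplicity of $\pm i$, and because that multiplicity is irrelevant for the $R$-component while no real roots ever appear, the two equations are forced to share the same solution set.

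If one prefers a self-contained verification bypassing the characterization theorem, the same conclusion follows from a minimal-polynomial argument, which I expect to be the only mildly delicate point. A $2\times 2$ real matrix $X$ with $(X^2+I)^n = O$ has real minimal polynomial dividing $(x^2+1)^n$ and of degree at most $2$; since $x^2+1$ is irreducible over $\mathbb R$ and contributes no real linear factor, the only divisor of positive degree at most $2$ is $x^2+1$ itself, which forces $X^2 + I = O$. The converse is immediate, since $X^2+I = O$ gives $(X^2+I)^n = O^n = O$. Either route delivers the proposition.
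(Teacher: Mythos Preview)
Your proof is correct and follows exactly the paper's intended approach: the paper states that all propositions in this section follow from the results of Sections~3 and~4, and your direct application of Theorem~\ref{character_theorem} to the root structure of $(x^2+1)^n$ is precisely that derivation. The minimal-polynomial alternative you append is a valid self-contained route that the paper does not mention, but it is a bonus rather than a departure.
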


\begin{prop} The solution set of $X^2 -I =O$ is equal to the disjoint union of 
$S(D(-1,-1))$ , $S(D(1,1))$ and $S(D(-1,1))$.  
\end{prop}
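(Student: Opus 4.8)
The final statement to prove is Proposition~5.3 (the solution set of $X^2 - I = O$). Let me sketch a proof plan.

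\textbf{Proof proposal.} The plan is to recognize the equation $X^2 - I = O$ as the instance of equation~$(5)$ with $a_1 = 0$ and $a_0 = -1$, so that Lemma~\ref{lem_3}(A) applies directly with $|a_0| = 1$. First I would invoke Lemma~\ref{lem_3}(A) to write the solution set $S$ as the union $S^A_0 \cup S^A_1 \cup S^A_2$, where $S^A_0 = \{I, -I\}$ (taking $\sqrt{|a_0|} = 1$) and $S^A_1, S^A_2$ are the families of trace-zero square roots of $I$ parametrized by $(b,c)$ with $bc \le 1$. Alternatively, and more in the spirit of the section, one may observe that $x^2 - 1 = (x-1)(x+1)$ has the two simple real roots $p = -1$ and $q = 1$, so Theorem~\ref{character_theorem} immediately gives that $S$ is the disjoint union of $S(D(-1,-1))$, $S(D(1,1))$, and $S(D(-1,1))$.

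The key steps, in order, are: (1) identify the two real roots $-1 < 1$ of the associated scalar polynomial $f_s(x) = x^2 - 1$, noting there are no repeated roots and no complex roots; (2) apply Theorem~\ref{character_theorem}, whose list of constituent sets collapses in this case to exactly the three sets $S(D(-1,-1))$ for the root $p=-1$, $S(D(1,1))$ for the root $q=1$, and $S(D(-1,1))$ for the pair $p<q$ — there is no $S(J(p))$ term since neither root is repeated, and no $S(R(a,b))$ term since there are no non-real roots; (3) conclude the union is disjoint, which is part of the assertion of Theorem~\ref{character_theorem} itself. If instead one prefers the computational route via Lemma~\ref{lem_3}(A), one identifies $S^A_0$ with $S(D(-1,-1)) \cup S(D(1,1)) = \{-I\} \cup \{I\}$ and $S^A_1 \cup S^A_2$ with $S(D(-1,1))$, exactly as recorded in the Remark following Lemma~\ref{lem_3}.

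I do not expect any real obstacle here: the statement is an immediate specialization of results already established, and the only thing to check is the bookkeeping that the general decomposition of Theorem~\ref{character_theorem} has precisely three nonempty pieces for this polynomial. The mild point worth a sentence is why $S(D(-1,1))$ is genuinely present (it is nonempty and in fact two-dimensional by Corollary~\ref{cor_about_S}), so that the three-fold decomposition is not redundant, and why the union is disjoint — two diagonal matrices $D(p,p)$ and $D(q,q)$ with $p \neq q$ are not similar, and a matrix conjugate to $D(-1,1)$ has trace $0$ whereas $\pm I$ have trace $\pm 2$, so the three orbits cannot overlap.
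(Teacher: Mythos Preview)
Your proposal is correct and matches the paper's approach: Section~5 explicitly states that all its facts follow from the results of Sections~3 and~4, and Proposition~5.3 is indeed the direct specialization of Theorem~\ref{character_theorem} to $f_s(x)=x^2-1$, exactly as you describe. Your alternative computational route via Lemma~\ref{lem_3}(A) and the Remark following it is also precisely what the paper records, so there is nothing to add.
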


\begin{prop} The solution set of $(X^2 -I)^n =O, n \geq 2,$ is equal to the disjoint union of 
$S(D(-1,-1))$ , $S(D(1,1))$, $S(J(-1))$, $S(J(1))$ and $S(D(-1,1))$, which is the solution set of 
$(X^2 -I)^2 =O$.  
\end{prop}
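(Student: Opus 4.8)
The plan is to read the solution set directly off Theorem \ref{character_theorem}, applied to the polynomial $f_s(x) = (x^2-1)^n = (x-1)^n(x+1)^n$, and then to observe that the resulting list of summands is independent of $n$ for $n \geq 2$.

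First I would factor $f_s(x) = (x-1)^n (x+1)^n$ and catalogue its roots over $\mathbb{C}$. For every $n \geq 2$ the only roots are $x = 1$ and $x = -1$; both are real, both have multiplicity $n \geq 2$, and there are no roots in $\mathbb{C} \setminus \mathbb{R}$.

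Next I would feed this root data into the four-part disjoint-union decomposition provided by Theorem \ref{character_theorem}. Since $1$ and $-1$ are real roots, each contributes a summand of the form $S(D(p,p))$, namely $S(D(1,1))$ and $S(D(-1,-1))$. Since each has multiplicity $\geq 2$, each additionally contributes a summand $S(J(p))$, namely $S(J(1))$ and $S(J(-1))$. The pair $-1 < 1$ of distinct real roots contributes the single summand $S(D(-1,1))$. Finally, as there are no complex roots, no summand of the form $S(R(a,b))$ appears. This yields precisely the stated disjoint union of the five pieces $S(D(-1,-1))$, $S(D(1,1))$, $S(J(-1))$, $S(J(1))$, $S(D(-1,1))$.

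For the last assertion I would note that the enumeration above is insensitive to the exact value of the multiplicity beyond the threshold $2$: Theorem \ref{character_theorem} distinguishes only whether a real root is simple (yielding just the $S(D(p,p))$ piece) or has multiplicity $\geq 2$ (additionally yielding the $S(J(p))$ piece), and whether a root is real or complex. Applying the same decomposition to $(x^2-1)^2 = (x-1)^2(x+1)^2$, whose roots are again $\pm 1$ each with multiplicity $2$, produces the identical list of five summands. Hence the solution set of $(X^2-I)^n = O$ coincides for all $n \geq 2$ with the solution set of $(X^2-I)^2 = O$. There is no genuine obstacle here; the only point demanding care is the bookkeeping of root multiplicities when invoking Theorem \ref{character_theorem}, in particular verifying that multiplicity $n \geq 2$ triggers each Jordan-block piece $S(J(\pm 1))$ exactly once and that the distinct-root piece $S(D(-1,1))$ is counted only once.
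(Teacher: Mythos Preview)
Your proposal is correct and follows exactly the approach the paper intends: the paper states that all facts in this section follow from the results of Sections~3 and~4, and your argument is precisely a direct application of Theorem~\ref{character_theorem} to $f_s(x)=(x-1)^n(x+1)^n$, with the observation that the root data (real roots $\pm 1$, each of multiplicity $\geq 2$, no complex roots) is the same for all $n\geq 2$.
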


\begin{cor} The solution set $S_1$ of $X^2 -I =O$ and the solution set $S_2$ of 
$(X^2 -I)^2 =O$ are  different.  Moreover, $$S_2 = S_1 \cup S(J(-1)) \cup S(J(1))$$ and 
$$\dim (S_2 \setminus S_1) = \dim (S(J(-1)) \cup S(J(1))) =2.$$
\end{cor}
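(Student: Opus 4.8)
The plan is to deduce the final corollary by combining three facts already established in the excerpt: the characterization of solution sets from Theorem~\ref{character_theorem}, the dimension computation for the quadratic case from Theorem~\ref{thm} (equivalently Corollary~\ref{cor_about_S}), and the countable/finite sum theorem for $\dim$ together with its monotonicity. First I would apply Theorem~\ref{character_theorem} to the polynomial $f_s(x) = (x^2-1)^2 = (x-1)^2(x+1)^2$. Its real roots are $x=1$ and $x=-1$, each with multiplicity $2$, and there are no non-real roots and no pair $p<q$ of distinct real roots occurring as the ``$D(p,q)$ with $p<q$'' piece except $p=-1$, $q=1$. Hence $S_2$ is the disjoint union $S(D(-1,-1)) \cup S(D(1,1)) \cup S(J(-1)) \cup S(J(1)) \cup S(D(-1,1))$. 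Comparing with the description of $S_1$ (the solution set of $X^2-I=O$, i.e.\ $f_s(x)=(x-1)(x+1)$, whose only roots are the simple roots $\pm 1$), Theorem~\ref{character_theorem} gives $S_1 = S(D(-1,-1)) \cup S(D(1,1)) \cup S(D(-1,1))$. Subtracting, $S_2 \setminus S_1 = S(J(-1)) \cup S(J(1))$, and since the union defining $S_2$ is disjoint, $S_2 = S_1 \cup S(J(-1)) \cup S(J(1))$ is the asserted decomposition; the fact that $S_1 \neq S_2$ then follows because $S(J(1))$ is nonempty (it contains $J(1)$ itself, which is not diagonalizable and hence lies in none of the pieces of $S_1$).

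For the dimension equality, I would argue as follows. By Corollary~\ref{cor_about_S} we have $\dim S(J(-1)) = \dim S(J(1)) = 2$. Each of these sets is a continuous image of $GL_2(\mathbb R)$ under a conjugation map and, more to the point, each is a closed subset of $M_2(\mathbb R)$: indeed, a matrix is similar to $J(p)$ precisely when it has $p$ as a double eigenvalue and is not scalar, i.e.\ it satisfies the closed conditions ``trace $=2p$, determinant $=p^2$'' together with ``$\neq pI$''—but in fact it is cleaner to observe that $S(J(p)) = \{X : (X-pI)^2 = O\} \setminus \{pI\}$, and by Theorem~\ref{thm}(B) (applied after the shift $Y = X - pI$) this set is closed in $M_2(\mathbb R)$. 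Therefore $S(J(-1)) \cup S(J(1))$ is a finite union of closed sets each of dimension $2$, so by the countable sum theorem (i) its dimension is $\leq 2$; and by monotonicity (ii) it is $\geq \dim S(J(1)) = 2$. Hence $\dim(S_2 \setminus S_1) = \dim(S(J(-1)) \cup S(J(1))) = 2$.

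The only genuine subtlety—and the step I would be most careful about—is justifying that the pieces $S(J(\pm1))$ are closed subsets of $M_2(\mathbb R)$ (so that the countable sum theorem applies) rather than merely $\sigma$-compact as Lemma~\ref{fibres} gives for a general conjugation orbit. This is handled by the explicit description in Theorem~\ref{thm}(B): the solution set of $Y^2 = O$ is $S^B_1 \cup S^B_2$, a union of two sets each homeomorphic to the closed region $\{(b,c) : bc \leq 0\} \subseteq \mathbb R^2$ via an explicit continuous (indeed semi-algebraic) parametrization with continuous inverse, hence closed; translating back by $-pI$ preserves closedness, and removing the single point $pI$—which corresponds to the sub-case $a+d\neq 0$ there, giving $S(D(0,0))$—is exactly the decomposition $S = S(D(0,0)) \cup S(J(0))$ recorded in the remark, with $S(J(0))$ itself closed since it equals $\{Y : Y^2 = O,\ Y \neq O\}$ and one checks directly from $S^B_1 \cup S^B_2$ that $O$ is not a limit point of the nonzero nilpotents (every nonzero $Y$ with $Y^2=O$ has $b$ or $c$ nonzero, and the set of such is relatively closed). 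Once closedness is in hand, everything else is the routine bookkeeping indicated above.
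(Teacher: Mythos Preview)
Your overall approach matches the paper's implicit argument (the paper simply states that all facts in that section follow from Sections~3 and~4), but the step you flag as ``the only genuine subtlety'' is actually wrong as written. The set $S(J(p))$ is \emph{not} closed in $M_2(\mathbb R)$: for instance
\[
pI + \begin{pmatrix} 0 & t \\ 0 & 0 \end{pmatrix} \in S(J(p))\quad\text{for every } t \neq 0,
\]
and these matrices converge to $pI \notin S(J(p))$ as $t \to 0$. Your justification that ``every nonzero $Y$ with $Y^2 = O$ has $b$ or $c$ nonzero, and the set of such is relatively closed'' fails for exactly this reason: the set of matrices with $b$ or $c$ nonzero is open, not closed, in $M_2(\mathbb R)$.

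The repair is easy and there are several routes. You may observe that $S(J(-1))$ and $S(J(1))$ \emph{are} closed in their union $S(J(-1)) \cup S(J(1))$, since the closure of $S(J(p))$ in $M_2(\mathbb R)$ is $S(J(p)) \cup \{pI\}$ and $\pm I$ does not lie in either conjugacy class; then the sum theorem applies. Alternatively, invoke Lemma~\ref{fibres} to write each $S(J(\pm 1))$ as a countable union of compact (hence closed) sets, each of dimension $\leq 2$ by monotonicity. Simplest of all: monotonicity alone gives $\dim(S_2 \setminus S_1) \leq \dim S_2 = 2$ by Theorem~\ref{theorem_dim}, and the reverse inequality follows from $S(J(1)) \subseteq S_2 \setminus S_1$ together with Corollary~\ref{cor_about_S}, so no sum theorem is needed at all.
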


\begin{prop} The solution set  of $X^2(X^2 -I) =O$ is equal to the disjoint union of 
$S(D(0,0))$, $S(J(0))$, $S(D(-1,-1))$ , $S(D(1,1))$, $S(D(-1,1))$,  $S(D(-1,0)),$ and  $S(D(0,1))$.
\end{prop}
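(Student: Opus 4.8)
The plan is to read off the structure of $S$ directly from Theorem~\ref{character_theorem}, so essentially all the work reduces to analyzing the roots of the associated scalar polynomial and then mechanically enumerating the pieces. First I would identify the polynomial corresponding to the equation $X^2(X^2-I)=O$. Writing out $F_s(X)=X^2(X^2-I)=X^4-X^2$, the associated scalar polynomial is $f_s(x)=x^4-x^2=x^2(x-1)(x+1)$. Thus the real roots of the equation $f_s(x)=0$ are $-1$, $0$, and $1$, where $0$ has multiplicity $2$ and $\pm 1$ are simple roots, and there are no roots in $\mathbb{C}\setminus\mathbb{R}$.

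Next I would invoke Theorem~\ref{character_theorem}, which asserts that $S$ is the disjoint union of four families of sets, indexed respectively by: each real root $p$ (contributing $S(D(p,p))$); each real root $p$ of multiplicity $\geq 2$ (contributing $S(J(p))$); each ordered pair $p<q$ of distinct real roots (contributing $S(D(p,q))$); and each non-real root $a+ib$ with $b>0$ (contributing $S(R(a,b))$). The proof then consists of listing the members of each family for our specific polynomial.

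Carrying out the enumeration: the three real roots $-1,0,1$ give the diagonal-scalar pieces $S(D(-1,-1))$, $S(D(0,0))$, and $S(D(1,1))$; the unique root of multiplicity $\geq 2$, namely $0$, gives the single Jordan piece $S(J(0))$; the three pairs $-1<0$, $-1<1$, and $0<1$ give $S(D(-1,0))$, $S(D(-1,1))$, and $S(D(0,1))$; and since $f_s$ has no non-real roots, the family of $S(R(a,b))$ pieces is empty. Collecting these seven sets yields exactly the disjoint union stated in the proposition.

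I do not expect any genuine obstacle here, since the statement is a specialization of Theorem~\ref{character_theorem}; the only point demanding care is bookkeeping. In particular I would be explicit that the multiplicity condition singles out $p=0$ alone for the $J$-term (so there is no $S(J(\pm 1))$), and that one must traverse \emph{all} unordered pairs of distinct real roots exactly once under the convention $p<q$, which accounts for precisely the three mixed diagonal pieces. Finally, I would note that the disjointness of the seven pieces is already guaranteed by Theorem~\ref{character_theorem} and requires no separate verification.
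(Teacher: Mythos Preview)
Your proposal is correct and follows exactly the approach the paper intends: Section~5 states that ``All facts in this section follow from the results of Sections~3 and~4,'' so the paper's own proof is precisely the application of Theorem~\ref{character_theorem} to the root data of $f_s(x)=x^2(x-1)(x+1)$ that you have carried out. Your enumeration of the seven pieces and the bookkeeping remarks are accurate.
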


\begin{cor} The solution set $S$ of $X^2(X^2 -I)=O$ and the union of the solution set $S_1$ of 
$X^2 =O$ and the solution set $S_2$ of 
$X^2-I =O$ are  different.  Moreover, $$S = S_1 \cup S_2 \cup S(D(-1,0)) \cup S(D(0,1))$$ and 
$$\dim (S \setminus (S_1 \cup S_2)) = \dim (S(D(-1,0)) \cup S(D(0,1))) =2.$$
\end{cor}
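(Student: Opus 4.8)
The plan is to read the three relevant decompositions off Theorem~\ref{character_theorem}, use the disjointness of the pieces to carry out the set subtraction, and then pin down the dimension by squeezing it between two applications of the monotone theorem.

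First I would record the root data. The polynomial attached to $X^2(X^2-I)=O$ is $x^2(x^2-1)=x^2(x-1)(x+1)$, whose real roots are $0$ (of multiplicity $2$) together with the simple roots $1$ and $-1$, and which has no non-real roots. Feeding this into Theorem~\ref{character_theorem} (equivalently, quoting the preceding proposition) produces the disjoint decomposition
$$S = S(D(0,0)) \cup S(J(0)) \cup S(D(-1,-1)) \cup S(D(1,1)) \cup S(D(-1,1)) \cup S(D(-1,0)) \cup S(D(0,1)).$$
Applying the same theorem to $X^2=O$ (root $0$ of multiplicity $2$, so the $S(D)$- and $S(J)$-pieces appear) and to $X^2-I=O$ (simple roots $\pm1$) gives
$$S_1 = S(D(0,0)) \cup S(J(0)), \qquad S_2 = S(D(-1,-1)) \cup S(D(1,1)) \cup S(D(-1,1)).$$

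The second step is the subtraction, and this is the only point that needs genuine care. Each set $S(D(p,q))$ and $S(J(p))$ is \emph{by definition} the conjugation orbit $Conj_B(GL_2(\mathbb R))$ of a fixed canonical matrix $B$, so it is one and the same subset of $M_2(\mathbb R)$ regardless of which equation it happens to arise from; thus the five pieces listed in $S_1\cup S_2$ are literally five of the seven pieces of $S$. Since Theorem~\ref{character_theorem} guarantees that these orbits are pairwise disjoint, removing $S_1\cup S_2$ from $S$ deletes exactly those five pieces and leaves
$$S \setminus (S_1 \cup S_2) = S(D(-1,0)) \cup S(D(0,1)).$$
This set is non-empty (for instance $D(-1,0)\in S(D(-1,0))$, as it is the image of $D(-1,0)$ under conjugation by the identity), so $S\ne S_1\cup S_2$; and because $S_1,S_2\subseteq S$ (any solution of $X^2=O$ or $X^2-I=O$ clearly solves $X^2(X^2-I)=O$), we obtain $S = S_1 \cup S_2 \cup S(D(-1,0)) \cup S(D(0,1))$.

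It remains to compute the dimension, and here the set equality just established already yields the first equality $\dim(S\setminus(S_1\cup S_2)) = \dim(S(D(-1,0))\cup S(D(0,1)))$ for free. For the value, the lower bound comes from $S(D(-1,0))\subseteq S\setminus(S_1\cup S_2)$ with $\dim S(D(-1,0))=2$ by Corollary~\ref{cor_about_S}, so the monotone theorem gives $\dim(S\setminus(S_1\cup S_2))\ge 2$; the upper bound comes from $S\setminus(S_1\cup S_2)\subseteq S$ with $\dim S=2$ by Theorem~\ref{theorem_dim}, so the monotone theorem gives $\dim(S\setminus(S_1\cup S_2))\le 2$. Hence $\dim(S\setminus(S_1\cup S_2)) = \dim(S(D(-1,0))\cup S(D(0,1))) = 2$. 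I expect the disjointness bookkeeping of the subtraction step to be the main (indeed only) subtlety; every other assertion is a direct quotation of the results of Sections~3 and~4.
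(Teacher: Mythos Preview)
Your proposal is correct and follows exactly the route the paper intends: Section~5 opens with the blanket statement that ``all facts in this section follow from the results of Sections~3 and~4,'' and no explicit proof is given for this corollary. Your argument---reading off the decompositions from Theorem~\ref{character_theorem} (via the three preceding propositions), subtracting by disjointness of the orbits, and sandwiching the dimension between Corollary~\ref{cor_about_S} and Theorem~\ref{theorem_dim}---is precisely what is being left to the reader.
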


%%%%%%%%%%%%%%%%%%%%%%%%%%%%%%%%%%%%%%%%%%%%%%%%%%%

\section{Examples of $F_s(X)= A$, where $A \ne O$ and $\det A = 0$}
%%%%%%%%%%%%%%%%%%%%%%%%%%%%%%%%%%%%%%%%%%%%%%%%%%

Consider $A\in M_2(\mathbb R)$ such that
$A \ne O$ and $\det A = 0$, and the equation

$$X^2 + a_0 \cdot I = A .\eqno{(8)}$$ 

\begin{prop}\label{lem_4} The solution set $S_A$ of $(8)$ has exactly $0, 2,$ or $4$  distinct solutions. In particular, if $a_0 > 0$ then $S_A = \emptyset.$
\end{prop}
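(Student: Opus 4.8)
The plan is to diagonalize or triangularize $A$ and reduce the matrix equation to a small system of scalar equations, much as was done in Lemma~\ref{lem_3}. Since $A \ne O$ and $\det A = 0$, the matrix $A$ has eigenvalues $0$ and $\operatorname{tr} A =: \lambda$, and there are two cases: either $\lambda \ne 0$, in which case $A$ is similar over $\mathbb R$ to $D(\lambda, 0)$, or $\lambda = 0$, in which case $A$ is a nonzero nilpotent, hence similar over $\mathbb R$ to $J(0) = \left(\begin{smallmatrix} 0 & 1 \\ 0 & 0 \end{smallmatrix}\right)$. Writing $A = C^{-1} A_0 C$ with $A_0 \in \{D(\lambda,0), J(0)\}$ and substituting $X = C^{-1} Y C$, equation $(8)$ becomes $Y^2 + a_0 \cdot I = A_0$; since $X \mapsto C^{-1} X C$ is a bijection of $M_2(\mathbb R)$, the solution set $S_A$ is in bijection with the solution set of the reduced equation, and in particular has the same cardinality. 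So it suffices to count solutions of $Y^2 = A_0 - a_0 \cdot I$ in each of the two cases.

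First I would handle the case $A_0 = D(\lambda, 0)$, so the equation is $Y^2 = D(\lambda - a_0, -a_0)$. Writing $Y = \left(\begin{smallmatrix} a & b \\ c & d \end{smallmatrix}\right)$ and using the formula for $Y^2$ from Section~4, we get $a^2 + bc = \lambda - a_0$, $bc + d^2 = -a_0$, $b(a+d) = 0$, $c(a+d) = 0$. Subtracting the first two equations gives $a^2 - d^2 = \lambda \ne 0$, so $a + d \ne 0$; hence $b = c = 0$, and the system reduces to $a^2 = \lambda - a_0$, $d^2 = -a_0$. The number of solutions is therefore the product of the number of real square roots of $\lambda - a_0$ and of $-a_0$: this is $0$ if either $\lambda - a_0 < 0$ or $-a_0 < 0$; it is $2$ if exactly one of $\lambda - a_0, -a_0$ is zero and the other positive; and it is $4$ if both $\lambda - a_0 > 0$ and $-a_0 > 0$ (note they cannot both be zero since $\lambda \ne 0$). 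This gives exactly $0$, $2$, or $4$ solutions. The case $A_0 = J(0)$ gives the equation $Y^2 = \left(\begin{smallmatrix} -a_0 & 1 \\ 0 & -a_0 \end{smallmatrix}\right)$; the same four scalar equations become $a^2 + bc = -a_0$, $bc + d^2 = -a_0$, $b(a+d) = 1$, $c(a+d) = 0$. From $b(a+d) = 1$ we get $a + d \ne 0$, so $c = 0$; then $a^2 = d^2 = -a_0$, forcing $-a_0 \geq 0$ and $a = \pm\sqrt{-a_0}$, $d = \pm\sqrt{-a_0}$ with $a + d \ne 0$, so $a = d = \sqrt{-a_0}$ or $a = d = -\sqrt{-a_0}$, and then $b = 1/(2a)$ is determined. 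If $a_0 < 0$ this yields exactly $2$ solutions; if $a_0 = 0$ then $a = d = 0$ contradicts $a + d \ne 0$, so there are no solutions; and $a_0 > 0$ is impossible. In all cases $S_A$ has $0$, $2$, or $4$ elements.

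Finally, the last assertion: if $a_0 > 0$ then $S_A = \emptyset$. In the diagonalizable subcase we need $-a_0 \geq 0$, which fails; in the nilpotent subcase we need $-a_0 \geq 0$, which also fails. Hence no solutions exist, as claimed. I expect the only mildly delicate point to be bookkeeping the boundary cases where one of the radicands vanishes (ensuring the count lands on $2$ rather than $1$ or $3$), together with checking that the similarity reduction genuinely preserves the count — but both are routine, and the reduction to $A_0 \in \{D(\lambda,0), J(0)\}$ follows immediately from Proposition~\ref{auxprop_1} once one observes that the third (rotation) normal form $R(a,b)$ cannot occur for a matrix with $\det A = 0$ and $A \ne O$.
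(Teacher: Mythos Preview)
Your argument is correct and is essentially the same as the paper's: both reduce $(8)$ to $Y^2 = C$ with $C$ having real eigenvalues, put $C$ in Jordan form (diagonal or a single Jordan block), and solve the resulting $2\times 2$ scalar system entrywise. The only cosmetic difference is that you conjugate $A$ to normal form first and then subtract $a_0 I$, whereas the paper subtracts first and then conjugates $C = A - a_0 I$; since $a_0 I$ is central these are the same computation, and your case split $\operatorname{tr} A \ne 0$ versus $\operatorname{tr} A = 0$ coincides with the paper's split $\lambda \ne \mu$ versus $\lambda = \mu$.
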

\begin{proof}  Note that $(8)$ is equivalent to 
$X^2 = C,$
where $C = A - a_0 \cdot I$ and $C$ has a real eigenvalue $\lambda = -a_0$. Hence, the second eigenvalue
$\mu$ of $C$ is also  real. Consider two cases: $\lambda \ne \mu$ and $\lambda = \mu.$

If $\lambda \ne \mu$,
without loss of generality we may assume that 
$C = \left(\begin{array}{rr}  \lambda & 0 \\ 0 & \mu  \end{array}\right).$
Note that
$$
\left\{\begin{array}{l} a^2+bc= \lambda \\ b(a+d) = 0\\c(a+d) = 0 \\  d^2+bc= \mu.  \end{array}\right.
$$
If $a+d \ne 0$ then $b=c=0$. This implies that $a^2 = \lambda$, $d^2 = \mu$.  
Then if at least one of  $\lambda$, $\mu$ is negative, we have no solutions.
If $\mu > 0$ and $\lambda = 0$ or $\mu =0$ and $\lambda > 0$, we have $2$ distinct solutions, and if both  $\mu$,  $\lambda$ are positive, we have $4$ distinct solutions.

\noindent 
If $a+d = 0$ then $a= -d$ and so $a^2 = d^2$. This implies that $\lambda = \mu$, a contradiction.
We have no additional solutions. 

The case $\lambda = \mu$. Without loss of generality we may assume that
$C = \left(\begin{array}{rr}  \lambda &  1 \\ 0 & \lambda  \end{array}\right)$.
Note that
$$
\left\{\begin{array}{l} a^2+bc= \lambda \\ b(a+d) = 1\\c(a+d) = 0 \\  d^2+bc= \lambda.  \end{array}\right.
$$
Observe that $a+d \ne 0$. So  $c=0$. This implies that $a^2 = d^2= \lambda$.  
Then if $\lambda \leq 0$, we have no solutions.
If $\lambda >0$, we have $2$ distinct solutions.

\end{proof}

\begin{example} 
\begin{itemize}
\item[(i)] The equation $X^2 +I = \left(\begin{array}{rr}  0 & 1 \\ 0 & 0  \end{array}\right)$ has no solutions,
\item[(ii)] the equation $X^2 -I = \left(\begin{array}{rr}  0 & 1 \\ 0 & 0  \end{array}\right)$ has $2$ solutions,
\item[(iii)] the equation $X^2 -I = \left(\begin{array}{rr}  1 & 1 \\ 0 & 0  \end{array}\right)$ has $4$ solutions.
\end{itemize}
\end{example}

Consider  equation
$$X^2 + a_1 \cdot X + a_0 \cdot I = A.\eqno{(9)}$$ 
\begin{cor} \label{cor} If $-a_1^2 + 4 a_0 > 0$ then $(9)$ has no solution.
\end{cor}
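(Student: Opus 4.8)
The plan is to reduce equation $(9)$ to equation $(8)$ by the same completion-of-square trick already used to pass from $(5)$ to $(7)$, and then invoke Proposition \ref{lem_4}. First I would set $Y = X + \frac{a_1}{2}\cdot I$, so that $(9)$ becomes
$$Y^2 + \left(a_0 - \frac{a_1^2}{4}\right)\cdot I = A,$$
which is an instance of $(8)$ with the constant term $a_0' = a_0 - \frac{a_1^2}{4} = \frac{4a_0 - a_1^2}{4}$. The hypothesis $-a_1^2 + 4a_0 > 0$ says precisely that $a_0' > 0$. Since the substitution $X \mapsto X + \frac{a_1}{2}\cdot I$ is a bijection of $M_2(\mathbb R)$ onto itself, the solution set of $(9)$ is in bijective correspondence with the solution set of this new equation.

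Then the conclusion is immediate: Proposition \ref{lem_4} states that when $a_0' > 0$ the solution set of $X^2 + a_0'\cdot I = A$ is empty (here using that $A \ne O$ and $\det A = 0$ — note these conditions on $A$ are unchanged by the substitution since $A$ itself is not altered). Therefore $(9)$ has no solution. One small point worth checking is that Proposition \ref{lem_4} is stated under the standing assumption $A \ne O$, $\det A = 0$ of that section, so I would make sure the statement of the corollary is understood to carry those same hypotheses on $A$; otherwise for general $A$ (e.g. $A$ a positive scalar multiple of $I$) the claim would fail.

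I do not anticipate any real obstacle here — the corollary is a direct translation of Proposition \ref{lem_4} through an affine change of variables, and all the work has already been done. The only thing to be careful about is the bookkeeping of the constant term under the shift and making explicit that $\det A = 0$, $A \ne O$ are in force.
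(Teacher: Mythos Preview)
Your proposal is correct and follows exactly the same approach as the paper: complete the square to rewrite $(9)$ as $(X+\tfrac{a_1}{2}I)^2 + (-\tfrac{a_1^2}{4}+a_0)I = A$ and invoke Proposition~\ref{lem_4}. Your added remarks about the bijection $X\mapsto X+\tfrac{a_1}{2}I$ and the standing hypotheses $A\ne O$, $\det A=0$ only make the argument more explicit than the paper's two-line version.
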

\begin{proof} Observe that $(9)$ is equivalent to  
$$(X + \frac{a_1}{2} \cdot I)^2 + (- \frac{a_1^2}{4} + a_0) \cdot I = O.$$ 
Then apply Proposition \ref{lem_4}.
\end{proof}

\begin{prop} Let $F_s(X)= \prod_{i=1}^k H_{is}(X)$, where 

$H_{is}(X) = X^2 + a_{i1} \cdot X + a_{i0} \cdot I$ with 
$-a_{i1}^2 + 4 a_{i0} > 0$ for each $i \leq k$. 

Then the equation $F_s(X) = A$ has no solution.
\end{prop}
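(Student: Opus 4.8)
The plan is to reduce the general factored equation to the single-factor case handled by Corollary~\ref{cor}, by exploiting the fact that the factors $H_{is}(X)$ are polynomials in $X$ and hence commute with one another. First I would observe that if $B$ is a solution of $F_s(X) = A$, then $\prod_{i=1}^k H_{is}(B) = A$, and since all the $H_{is}(B)$ are polynomials in the single matrix $B$, they pairwise commute; in particular each $H_{is}(B)$ commutes with the product $A$. Taking determinants gives $\prod_{i=1}^k \det H_{is}(B) = \det A = 0$, so $\det H_{is_0}(B) = 0$ for some index $i_0$. Write $C = H_{i_0 s}(B)$; then $C$ is a $(2\times 2)$-matrix with $\det C = 0$, and $C \ne O$ would need to be checked (if $C = O$ then $B$ already solves $H_{i_0 s}(X) = O$, which is impossible since $-a_{i_0 1}^2 + 4a_{i_0 0} > 0$ forces that quadratic to have no real roots and, by Lemma~\ref{lem_1} together with Proposition~\ref{auxprop_1}, no matrix solutions either).

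With $C \ne O$ and $\det C = 0$ in hand, the equation $H_{i_0 s}(X) = C$, i.e. $X^2 + a_{i_0 1}X + a_{i_0 0}I = C$, is precisely an instance of equation $(9)$ with right-hand side $C$ satisfying $\det C = 0$, $C \ne O$. The hypothesis $-a_{i_0 1}^2 + 4a_{i_0 0} > 0$ is exactly the condition under which Corollary~\ref{cor} asserts that equation $(9)$ has no solution. But $B$ is a solution of $H_{i_0 s}(X) = C$ by construction, a contradiction. Hence $F_s(X) = A$ has no solution.

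The step I expect to be the main obstacle is the reduction itself, specifically making sure that "$\det H_{i_0 s}(B) = 0$ for some $i_0$" legitimately lets me apply Corollary~\ref{cor}: I must confirm that $C = H_{i_0 s}(B)$ is genuinely the right-hand side of an equation of the form $(9)$ with a \emph{rank-deficient but nonzero} matrix, and rule out $C = O$ cleanly. The $C = O$ exclusion is the delicate point — it requires knowing that a real quadratic with negative discriminant has no solution whatsoever among $(2\times 2)$ real matrices, which follows from Theorem~\ref{character_theorem} (the solution set decomposes into pieces indexed by real roots and conjugate complex-root pairs of $(4)$, and here $(4)$ has neither a real root nor a conjugate pair for $H_{i_0 s}$ — wait, it does have a complex conjugate pair, so in fact $H_{i_0 s}(X) = O$ \emph{does} have solutions of type $S(R(a,b))$). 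So instead the correct observation is: even though $H_{i_0 s}(X) = O$ has solutions, if $H_{i_0 s}(B) = O$ then $B$ solves $H_{i_0 s}(X) = O$ and we may apply Corollary~\ref{cor} with $A = O$? No — Corollary~\ref{cor} needs $\det A = 0$ only, and $\det O = 0$, but its proof via Proposition~\ref{lem_4} treats $A \ne O$. The clean fix: regardless of whether $C = O$ or $C \ne O$, we have $\det C = 0$, and Proposition~\ref{lem_4} together with Corollary~\ref{cor} covers $A = C$ for any $C$ with $\det C = 0$ (the case $C = O$ being subsumed once one checks Proposition~\ref{lem_4}'s argument goes through, or alternatively $C=O$ is handled directly by noting $X^2 = A - a_0 I$ has the single eigenvalue $-a_0 = -a_{i_01}/\ldots$ forcing a contradiction with the discriminant condition). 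I would phrase the final write-up to invoke Corollary~\ref{cor} directly on $H_{i_0 s}(X) = C$ with $\det C = 0$, checking that its hypothesis $-a_{i_0 1}^2 + 4a_{i_0 0} > 0$ holds by assumption, and thereby obtain the contradiction.
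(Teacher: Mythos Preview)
Your determinant approach is more direct than the paper's. The paper argues by induction on $k$: setting $A_1 = H_{ks}(X^*)$, it notes $A_1 \ne O$, then splits into the cases $\det A_1 = 0$ (contradiction via Corollary~\ref{cor}) and $\det A_1 \ne 0$ (so $A_2 := \prod_{i<k} H_{is}(X^*)$ has $A_2 \ne O$, $\det A_2 = \det A/\det A_1 = 0$, and one recurses). Your idea of taking determinants of both sides to locate a singular factor $C = H_{i_0 s}(B)$ in one shot avoids the induction entirely.

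Where your proposal stalls is precisely the $C = O$ case, and none of your attempted patches work: $H_{i_0 s}(X) = O$ genuinely has matrix solutions (the $S(R(a,b))$ piece, as you eventually notice), and Corollary~\ref{cor} really does require its right-hand side to be nonzero since it rests on Proposition~\ref{lem_4}, which assumes $A \ne O$. The resolution you are circling around but never state is the same one the paper uses (tacitly, in its unjustified ``Note that $A_1 \ne O$''): if $C = H_{i_0 s}(B) = O$ then the product $A = \prod_i H_{is}(B)$ has a zero factor and hence equals $O$, contradicting the standing hypothesis $A \ne O$ from the beginning of the section. One sentence, and commutativity is not even needed. With that line inserted before invoking Corollary~\ref{cor}, your argument is complete and arguably cleaner than the paper's.
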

\begin{proof}  Let $X^*$ be a solution of the equation $F_s(X) = A$. Then
$F_s(X^*)= \prod_{i=1}^k H_{is}(X^*) = A$. Let $H_{ks}(X^*) = A_1$. Note that $A_1 \ne O$. 
If $\det A_1 = 0$ we have a contradiction with Corollary \ref{cor}.  If $\det A_1 \ne 0$, we have  
$\prod_{i=1}^{k-1} H_{is}(X^*) = A_2$ and $A_2 \ne O$, $\det A_2 = 0$. Apply induction.
\end{proof}

\begin{example} Consider the equation $$(X^2-I)^2 =\left(\begin{array}{rr}  1 & 0 \\ 0 & 0  \end{array}\right) \ \ \ \ \ \ \ \ \ \ (10).$$ Note that the equation $Y^2 = \left(\begin{array}{rr}  1 & 0 \\ 0 & 0  \end{array}\right)$
has precisely two solutions:  $\left(\begin{array}{rr}  1 & 0 \\ 0 & 0  \end{array}\right)$ and 
$\left(\begin{array}{rr}  -1 & 0 \\ 0 & 0  \end{array}\right).$  
Then consider the equations 

$X^2 - I = \left(\begin{array}{rr}  1 & 0 \\ 0 & 0  \end{array}\right)$ and $X^2 - I = \left(\begin{array}{rr}  -1 & 0 \\ 0 & 0  \end{array}\right)$.

By the proof of Proposition \ref{lem_4}, the first one has exactly $4$ solutions but the second has $2$ solutions.
So the equation $(10)$ has precisely $6$ solutions.
\end{example}

\section{Concluding remarks}

In the Introduction  we noted that there are equations $(2)$ with finite solution sets and the empty
solution set.
Hence, the solution set of $(2)$ can also have dimension $0$ and $-1$.

\begin{question} Is there an equation $(2)$ for which the solution set has dimension $1$ or $3$?
\end{question} 

In this note, we restricted our attention to $(2\times 2)$-matrices. However, one can ask the following.

\begin{question} Let $S$ be the solution set of $(3)$ considered over $(m\times m)$-matrices with real coefficients, where $m>2$. What is the structure of $S$? What is the dimension of $S$?
\end{question}

\vskip 0.5 cm
\noindent(V.A. Chatyrko)\\
Department of Mathematics, Linkoping University, 581 83 Linkoping, Sweden.\\
vitalij.tjatyrko@liu.se

\vskip0.3cm
\noindent(A. Karassev) \\
Department of Mathematics,
Nipissing University, North Bay, Canada\\
alexandk@nipissingu.ca

\end{document}